
\documentclass[12pt]{article}
\usepackage{amsfonts, amsmath, amssymb, latexsym, eucal, amscd} 

\usepackage[dvips]{pict2e}

\usepackage{amsthm}
\usepackage{amscd}

\usepackage[dvipdfmx]{graphics}
\usepackage{cite}


\newtheorem{definition}{Definition}[section]
\newtheorem{theorem}[definition]{Theorem}
\newtheorem{lemma}[definition]{Lemma}
\newtheorem{corollary}[definition]{Corollary}

\newtheorem{example}[definition]{Example}

\newtheorem{note}[definition]{Note}

\newtheorem{proposition}[definition]{Proposition}


\typeout{Substyle for letter-sized documents. Released 24 July 1992}


\setlength{\topmargin}{-1in}
\setlength{\headheight}{1.5cm}
\setlength{\headsep}{0.3cm}
\setlength{\textheight}{9in}
\setlength{\oddsidemargin}{0cm}
\setlength{\evensidemargin}{0cm}
\setlength{\textwidth}{6.5in}

\begin{document} 

\title{\bf The $S_3$-symmetric $q$-Onsager algebra \\
and its Lusztig automorphisms
}
\author{
Paul Terwilliger
\\
\\
{\it Dedicated to Tom Koornwinder on his 80th birthday}
}
\date{}
\maketitle
\begin{abstract}
The $q$-Onsager algebra $O_q$ is defined by two generators and two relations, called the $q$-Dolan/Grady relations. 
In 2019, Baseilhac and Kolb introduced two  automorphisms of $O_q$, now  called the Lusztig automorphisms.
Recently, we introduced a generalization of $O_q$ called the $S_3$-symmetric $q$-Onsager algebra $\mathbb O_q$. 
The algebra $\mathbb O_q$ has six distinguished generators, said to be standard.
The standard $\mathbb O_q$-generators can be identified with the vertices of a regular hexagon, such that nonadjacent generators commute
and adjacent generators satisfy the $q$-Dolan/Grady relations.
In the present paper we do the following: (i) for each standard $\mathbb O_q$-generator we construct an automorphism of $\mathbb O_q$ called a Lusztig
automorphism; (ii) we describe how the six Lusztig automorphisms of $\mathbb O_q$ are related to each other; (iii) we describe what happens if a
 finite-dimensional irreducible $\mathbb O_q$-module is twisted by a Lusztig automorphism; (iv) we give a detailed example involving an irreducible 
 $\mathbb O_q$-module with dimension 5.
 \bigskip

\noindent
{\bf Keywords}. $q$-Dolan/Grady relations; Lusztig automorphism;  $q$-Onsager algebra.
\hfil\break
\noindent {\bf 2020 Mathematics Subject Classification}.
Primary: 33D80; 17B40.
 \end{abstract}
 
\section{Introduction} The $q$-Onsager algebra originated in Algebraic Combinatorics, in the study of $Q$-polynomial distance-regular graphs. The origin story
is summarized as follows.
In \cite[Lemma~5.4]{tSub3} it was shown that for  a $Q$-polynomial distance-regular graph $\Gamma$, the adjacency matrix $A$ and a certain diagonal matrix $A^*$ satisfy two relations of the form
\begin{align}
\lbrack A, A^2 A^* - \beta A A^* A + A^* A^2 - \gamma(A A^*+A^*A) - \varrho A^* \rbrack &=0, \label{eq:Rel1} \\
\lbrack A^*, A^{*2} A - \beta A^* A A^* + A A^{*2} - \gamma^*(A^*A+AA^*) - \varrho^* A \rbrack &=0, \label{eq:Rel2}
\end{align}
\noindent where $\lbrack R,S\rbrack=RS-SR$. The scalar parameters $\beta, \gamma, \gamma^*, \varrho, \varrho^*$ depend on $\Gamma$.
The  relations  \eqref{eq:Rel1}, \eqref{eq:Rel2} are now called the tridiagonal relations 
 \cite[Section~3]{qSerre}. In \cite{qSerre}  the following algebra was introduced.
For any scalars $\beta, \gamma, \gamma^*, \varrho, \varrho^*$ the tridiagonal algebra $T=T(\beta, \gamma, \gamma^*, \varrho, \varrho^*)$ is defined by two
generators $A, A^*$ subject to the tridiagonal relations \eqref{eq:Rel1}, \eqref{eq:Rel2}.
 The finite-dimensional irreducible $T$-modules are described in \cite[Theorems~3.7,~3.10]{qSerre}, using
the concept of a tridiagonal pair of linear transformations \cite{someAlg}.
Given a tridiagonal algebra $T(\beta, \gamma, \gamma^*, \varrho, \varrho^*)$ one can change the parameters by adjusting the generators with
a change of variables $A \mapsto rA+s$, $A^*\mapsto r^* A^*+s^*$, where $r,r^*, s, s^*$ are scalars  with $r, r^*$ nonzero. The precise effect on the parameters is explained in \cite[Section~4]{qSerre}.
Via this change of parameters, one can potentially express the given tridiagonal algebra in a more attractive form. The following two forms are of interest.

\begin{example} {\rm (See \cite[Example~3.2, Remark~3.8]{qSerre}.) } \rm
Assume the ground field has characteristic 0. For
\begin{align*}
\beta=2, \qquad \quad \gamma= \gamma^*=0, \qquad \quad \varrho =4, \qquad\quad  \varrho^* =4
\end{align*}
the tridiagonal relations become the Dolan/Grady relations
\begin{align*}
\lbrack A, \lbrack A, \lbrack A, A^* \rbrack \rbrack \rbrack = 4 \lbrack A ,A^*\rbrack, \qquad \quad
\lbrack A^*, \lbrack A^*, \lbrack A^*, A \rbrack \rbrack \rbrack =4 \lbrack A^*,A\rbrack.
\end{align*}
In this case, $T(\beta, \gamma, \gamma^*, \varrho, \varrho^*)$
  becomes the  enveloping algebra $U(O)$ for the  Onsager Lie algebra $O$. 
 \end{example}
\noindent The Onsager Lie algebra $O$ was introduced in \cite{Onsager}.
Some historical comments
 about $O$ can be found in \cite[Remark~34.5]{madrid}.

\begin{example} \label{ex:Oq} {\rm (See \cite[Section~2]{qOnsager1}, \cite[Section~1]{qOnsager2}, \cite[Section~1.2]{augIto}.)} \rm
For $\beta \not=\pm 2$,
\begin{align*}
\beta=q^2 + q^{-2}, \qquad \quad \gamma=\gamma^*=0, \qquad \quad \varrho= \varrho^*=-(q^2-q^{-2})^2
\end{align*}
the tridiagonal relations become the $q$-Dolan/Grady relations
\begin{align*}
A^3 A^*- \lbrack 3 \rbrack_q A^2 A^* A + \lbrack 3 \rbrack_q A A^* A^2 - A^* A^3 = (q^2-q^{-2})^2 (A^*A-AA^*), \\
A^{*3} A- \lbrack 3 \rbrack_q A^{*2} A A^*+ \lbrack 3 \rbrack_q A^* A A^{*2} - A A^{*3} = (q^2-q^{-2})^2 (AA^*-A^*A),
\end{align*}
where $\lbrack 3 \rbrack_q = q^2+q^{-2}+1$.
In this case, we call $T(\beta, \gamma, \gamma^*, \varrho, \varrho^*)$  the $q$-Onsager algebra $O_q$.
\end{example}
\noindent  The algebra $O_q$ has applications to tridiagonal pairs of $q$-Racah type
 \cite[Chapter~6]{bbit}, \cite{itoTD, augIto, TDqRacah, TDqtet} and boundary integrable systems \cite{qOnsager1, qOnsager2,bas8,basXXZ,
basBel, BK05, bas4, basKoi, basnc, lemarthe}.
The algebra $O_q$ can be viewed as a coideal subalgebra of the quantized enveloping algebra $U_q(\widehat{\mathfrak{sl}}_2)$, see \cite{bas8, basXXZ, kolb2}.
The algebra $O_q$ is the simplest example of a quantum symmetric pair coideal subalgebra of affine type \cite[Example~7.6]{kolb2}.
A Drinfeld type presentation of $O_q$ is obtained in \cite{wang2}, and this is used in \cite{iHall} to realize $O_q$ as an $\iota$Hall algebra of the projective line.
 The alternating central extension of $O_q$ is discussed in 
\cite{BK05, basnc, goff, congTer, ACETer,comTer,TerACE}.
\medskip

\noindent The present paper is motivated by two recent developments concerning $O_q$. In \cite{kolb}, Baseilhac and Kolb introduce two 
 automorphisms of $O_q$, now  called the Lusztig automorphisms. These automorphisms are discussed in \cite{LusTer}, \cite{twist} and Section 3 below.
In \cite{S3}, we introduced a generalization of the tridiagonal algebra $T(\beta, \gamma, \gamma^*, \varrho, \varrho^*)$
called the $S_3$-symmetric tridiagonal algebra $\mathbb T(\beta, \gamma, \gamma^*, \varrho, \varrho^*)$.
This algebra has six distinguished generators, said to be standard.
The standard generators can be identified with the vertices of a regular hexagon, such that nonadjacent generators commute
and adjacent generators satisfy the tridiagonal relations. Given a $Q$-polynomial distance-regular graph $\Gamma$, in \cite[Theorem~5.4]{S3}
we turned the tensor power $V^{\otimes 3}$ of the standard module $V$  into a module for an $S_3$-symmetric tridiagonal algebra.
\medskip

\noindent 
In the present paper we consider the $S_3$-symmetric $q$-Onsager algebra $\mathbb O_q$.
Our results are summarized as follows: (i) for each standard $\mathbb O_q$-generator we construct an automorphism of $\mathbb O_q$ called a Lusztig
automorphism; (ii) we describe how the six Lusztig automorphisms of $\mathbb O_q$ are related to each other; (iii) we describe what happens if a
 finite-dimensional irreducible $\mathbb O_q$-module is twisted by a Lusztig automorphism; (iv) we give a detailed example involving an irreducible 
 $\mathbb O_q$-module with dimension 5. 
 \medskip
 
 \noindent
 Our main results are Theorems \ref{thm:main1}, \ref{thm:SixLA},  \ref{thm:LPsi}  and Proposition \ref{prop:Lint}.
 \medskip
 
 \noindent This paper is organized as follows. Section 2 contains some preliminaries.
 In Section 3, we review the $q$-Onsager algebra $O_q$ and its Lusztig automorphisms.
 In Section 4, we discuss the $S_3$-symmetric $q$-Onsager algebra $\mathbb O_q$ and how it is related to $O_q$.
 In Section 5, we introduce the Lusztig automorphisms of $\mathbb O_q$ and describe how they are related to each other.
 In Section 6, we describe what happens when a finite-dimensional irreducible $\mathbb O_q$-module is twisted via a Lusztig automorphism.
 In Section 7, we give a detailed example involving an irreducible 
 $\mathbb O_q$-module with dimension 5. 
\medskip

\noindent For notational convenience, for the rest of the paper we will use the symbol $B$ instead of $A^*$.

\section{Preliminaries} We now begin our formal argument.
For the rest of the paper,
the following notational conventions  are in effect.
Recall the natural numbers $\mathbb N=\lbrace 0,1,2,\ldots \rbrace$.
Let $\mathbb F$ denote a field. Every vector space and tensor product we discuss, is understood to be over $\mathbb F$.
Every algebra we discuss, is understood to be associative, over $\mathbb F$, and have a multiplicative identity.
A subalgebra has the same multiplicative identity as the parent algebra. For an algebra $\mathcal A$,
by an  automorphism of $\mathcal A$ we mean an algebra isomorphism $\mathcal A \to \mathcal A$.
The automorphism group  ${\rm Aut}(\mathcal A)$ consists of the automorphisms of $\mathcal A$; the group operation is composition.
For an integer $n\geq 1$ let $\lambda_1, \lambda_2, \ldots, \lambda_n$ denote commuting indeterminates.
The algebra $\mathbb F\lbrack \lambda_1, \lambda_2, \ldots, \lambda_n\rbrack$ consists of the polynomials in
$\lambda_1, \lambda_2, \ldots, \lambda_n$ that have all coefficients in $\mathbb F$.
Let $V$ denote a nonzero vector space with finite dimension. The algebra ${\rm End}(V)$ consists of the $\mathbb F$-linear
maps from $V$ to $V$. For $F \in {\rm End}(V)$, we say that $F$ is {\it diagonalizable} whenever $V$ is spanned by
the eigenspaces of $F$.
 Assume that $F$ is diagonalizable, and let $\lbrace V_i \rbrace_{i=0}^d$ 
denote an ordering of the eigenspaces of $F$. The sum $V=\sum_{i=0}^d V_i$ is direct.
For $0 \leq i \leq d$ let $\theta_i$ denote the eigenvalue of $F$ for $V_i$. By construction, the scalars $\lbrace \theta_i \rbrace_{i=0}^d$ are
contained in $\mathbb F$ and mutually distinct. 
For $0 \leq i \leq d$ define $E_i \in {\rm End}(V)$ such that
$(E_i-I) V_i=0$ and $E_iV_j=0$ if $j \not=i$ $(0 \leq j \leq d)$. Thus $E_i$ is the projection from $V$ onto $V_i$. We call $E_i$ the {\it primitive idempotent} of $F$ associated with $V_i$ (or $\theta_i$).
By linear algebra 
(i) $I = \sum_{i=0}^d E_i$; 
(ii) $E_i E_j = \delta_{i,j} E_i$ $ (0 \leq i,j\leq d)$;
(iii) $F = \sum_{i=0}^d \theta_i E_i$;
(iv) $FE_i = \theta_i E_i = E_iF$ $(0 \leq i \leq d)$;
(v) $V_i = E_iV$ $ (0 \leq i \leq d)$.
Also by linear algebra,
\begin{align*}
  E_i=\prod_{\stackrel{0 \leq j \leq d}{j \neq i}}
          \frac{F-\theta_jI}{\theta_i-\theta_j} \qquad \qquad (0 \leq i \leq d).
\end{align*}
\noindent Fix a nonzero $q \in \mathbb F$ that is not a root of unity. Define
\begin{align*}
\lbrack n \rbrack_q = \frac{ q^n-q^{-n}}{q-q^{-1}} \qquad \qquad n \in \mathbb N.
\end{align*}
\noindent For elements $R,S$ in any algebra, their commutator is $\lbrack R,S\rbrack=RS-SR$.

\section{The  $q$-Onsager algebra $O_q$}
\noindent  In this section, we review the $q$-Onsager algebra $O_q$ and its Lusztig automorphisms.
As we mentioned in Example \ref{ex:Oq},
  the algebra
$O_q$  is defined by two
generators $A, B$ and  two relations
\begin{align*}
A^3 B- \lbrack 3 \rbrack_q A^2 B A + \lbrack 3 \rbrack_q A B A^2 - B A^3 = (q^2-q^{-2})^2 (BA-AB), \\
B^3 A- \lbrack 3 \rbrack_q B^2 A B+ \lbrack 3 \rbrack_q B A B^2 - A B^3 = (q^2-q^{-2})^2 (AB-BA).
\end{align*}
The above relations are called the $q$-Dolan/Grady relations. 
\medskip

\noindent The vector space $O_q$ is infinite-dimensional. Moreover the elements $A^i B^j$ $(i,j \in \mathbb N)$
are linearly independent in $O_q$, because there  exists an algebra
homomorphism $O_q \to \mathbb F\lbrack \lambda_1, \lambda_2\rbrack$ that sends $A\mapsto \lambda_1$
and $B\mapsto \lambda_2$. 
\medskip

\noindent Next, we discuss the automorphisms of $O_q$. By construction,  $O_q$ has an automorphism that sends $A \leftrightarrow B$. 
 In \cite{kolb}, Pascal Baseilhac and Stefan Kolb introduced two automorphisms of $O_q$ (here denoted by $L$ and $L^*$) that act as follows:
\begin{align*}
&L(A) = A, \qquad \qquad L(B) = B+ \frac{q A^2 B - (q+q^{-1}) A B A + q^{-1} B A^2}{(q-q^{-1})(q^2-q^{-2})},
\\
&L^*(B) = B, \qquad \qquad L^*(A) = A+ \frac{q B^2 A - (q+q^{-1}) BAB  + q^{-1} A B^2}{(q-q^{-1})(q^2-q^{-2})}.
\end{align*}
\noindent By \cite{kolb}, the inverses of $L$ and $L^*$ act as follows:
\begin{align*}
&L^{-1}(A) = A, \qquad \qquad L^{-1}(B) = B+ \frac{q^{-1} A^2 B - (q+q^{-1}) A B A + q B A^2}{(q-q^{-1})(q^2-q^{-2})},
\\
&(L^*)^{-1}(B) = B, \qquad \qquad (L^*)^{-1}(A) = A+ \frac{q^{-1} B^2 A - (q+q^{-1}) BAB + q AB^2}{(q-q^{-1})(q^2-q^{-2})}.
\end{align*}
Following \cite{LusTer} we call  $L$ (resp. $L^*$) the {\it Lusztig automorphism of $O_q$ associated with $A$ (resp. $B$)}. 
\medskip

\noindent Let $G$ denote the subgroup of ${\rm Aut}(O_q)$ generated by $L^{\pm 1}$, $(L^*)^{\pm 1}$. By \cite[Section~2]{qOnsUAW}, $G$ is freely
generated by $L^{\pm 1}, (L^*)^{\pm 1}$.

\begin{note} \label{note:global} \rm In \cite{qOnsUAW} there is a global assumption that $\mathbb F$ is algebraically closed with characteristic zero. However,
this assumption is not used in \cite[Section~2]{qOnsUAW} to prove that $G$ is  freely
generated by $L^{\pm 1}, (L^*)^{\pm 1}$.
\end{note}

\section{The $S_3$-symmetric $q$-Onsager algebra}
\noindent Let $S_3$ denote the symmetric group on the set $\lbrace 1,2,3\rbrace$. In \cite{S3}  we introduced a generalization of $O_q$ called the
$S_3$-symmetric $q$-Onsager algebra (here denoted by $\mathbb O_q$). We now recall the definition of $\mathbb O_q$.

\begin{definition} \label{def:bbO} \rm (See \cite[Lemma~3.5, Definition~4.1]{S3}.)
The algebra $\mathbb O_q$
is defined by generators 
\begin{align}
A_i, \quad B_i \qquad \quad i \in \lbrace 1,2,3\rbrace \label{eq:OGen}
\end{align}
and the following relations.
\begin{enumerate}
\item[\rm (i)] For distinct $i,j \in \lbrace 1,2,3\rbrace$,
\begin{align*}
\lbrack A_i, A_j \rbrack=0, \qquad \qquad \lbrack B_i, B_j \rbrack=0.
\end{align*}
\item[\rm (ii)] For $i \in \lbrace 1,2,3\rbrace$,
\begin{align*}
\lbrack A_i, B_i \rbrack=0.
\end{align*}
\item[\rm (iii)] For distinct $i,j \in \lbrace 1,2,3 \rbrace$,
\begin{align*}
A_i^3 B_j- \lbrack 3 \rbrack_q A_i^2 B_j A_i + \lbrack 3 \rbrack_q A_i B_j A_i^2 - B_j A_i^3 = (q^2-q^{-2})^2 (B_jA_i-A_iB_j), \\
B_j^3 A_i- \lbrack 3 \rbrack_q B_j^2 A_i B_j+ \lbrack 3 \rbrack_q B_j A_i B_j^2 - A_i B_j^3 = (q^2-q^{-2})^2 (A_iB_j-B_jA_i).
\end{align*}
\end{enumerate}
The elements \eqref{eq:OGen} are called the {\it standard generators} for $\mathbb O_q$.
\end{definition}

\noindent  Our  presentation of $\mathbb O_q$  is described by the diagram below:
\vspace{.5cm}

\begin{center}
\begin{picture}(0,60)
\put(-100,0){\line(1,1.73){30}}
\put(-100,0){\line(1,-1.73){30}}
\put(-70,52){\line(1,0){60}}
\put(-70,-52){\line(1,0){60}}
\put(-10,-52){\line(1,1.73){30}}
\put(-10,52){\line(1,-1.73){30}}

\put(-102,-3){$\bullet$}
\put(-72.5,49){$\bullet$}
\put(-72.5,-55){$\bullet$}

\put(16.5,-3){$\bullet$}
\put(-13.5,49){$\bullet$}
\put(-13.5,-55){$\bullet$}

\put(-120,-3){$A_1$}
\put(-11,60){$A_2$}
\put(-11,-66){$A_3$}

\put(26,-3){$B_1$}
\put(-82,-66){$B_2$}
\put(-82,60){$B_3$}

\end{picture}
\end{center}
\vspace{2.5cm}

\hspace{2.2cm}
 Fig. 1.  Nonadjacent generators commute. \\
${} \qquad \quad \quad \;\;$ Adjacent generators satisfy the $q$-Dolan/Grady relations.
\vspace{.5cm}

\noindent  Next, we describe how $O_q$ and $\mathbb O_q$ are related. By \cite[Lemmas~4.2, 4.4]{S3},
for distinct $i,j \in \lbrace 1,2,3\rbrace $ there exists an injective algebra homomorphism $O_q \to \mathbb O_q$
that sends $A \mapsto A_i$ and $B\mapsto B_j$.
The next three lemmas are of a similar nature. 

\begin{lemma}  \label{lem:OO} For distinct $i,j \in \lbrace 1,2,3\rbrace$ there exists an algebra homomorphism $O_q \otimes O_q \to \mathbb O_q$
that sends
\begin{align*}
A \otimes 1 \mapsto A_i, \qquad \quad
B \otimes 1 \mapsto B_j, \qquad \quad
1 \otimes A \mapsto A_j, \qquad \quad
1 \otimes B \mapsto B_i.
\end{align*}
\noindent This homomorphism is injective.
\end{lemma}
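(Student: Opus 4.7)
The plan splits into two parts: existence of the homomorphism, which comes from the universal property of the tensor product, and injectivity, which I will obtain by constructing an explicit left inverse. For existence, Definition \ref{def:bbO}(iii) guarantees that $A_i, B_j$ satisfy the $q$-Dolan/Grady relations, so there is an algebra homomorphism $\varphi_1: O_q \to \mathbb O_q$ with $\varphi_1(A) = A_i$, $\varphi_1(B) = B_j$; symmetrically there is $\varphi_2: O_q \to \mathbb O_q$ with $\varphi_2(A) = A_j$, $\varphi_2(B) = B_i$. To combine them into a homomorphism out of $O_q \otimes O_q$, I need the generators $A_i, B_j$ of $\varphi_1(O_q)$ to commute with the generators $A_j, B_i$ of $\varphi_2(O_q)$: Definition \ref{def:bbO}(i) gives $[A_i, A_j] = [B_j, B_i] = 0$, and Definition \ref{def:bbO}(ii) gives $[A_i, B_i] = [A_j, B_j] = 0$. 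Hence $\varphi: x \otimes y \mapsto \varphi_1(x) \varphi_2(y)$ is a well-defined algebra homomorphism $O_q \otimes O_q \to \mathbb O_q$.

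For injectivity, the idea is to build a retraction $\psi: \mathbb O_q \to O_q \otimes O_q$ with $\psi \circ \varphi = \mathrm{id}$. Let $k$ denote the element of $\{1,2,3\} \setminus \{i,j\}$. I would define $\psi$ on the standard generators of $\mathbb O_q$ by
\begin{align*}
\psi(A_i) = A \otimes 1, \quad \psi(B_j) = B \otimes 1, \quad \psi(A_j) = 1 \otimes A, \quad \psi(B_i) = 1 \otimes B,
\end{align*}
together with $\psi(A_k) = \psi(B_k) = 0$. The crucial insight is that collapsing the third pair $A_k, B_k$ to zero is compatible with every defining relation of $\mathbb O_q$; finding this right choice of $\psi$ is really the main obstacle, since the rest of the argument is bookkeeping.

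To see that $\psi$ is a well-defined algebra homomorphism, I would verify the relations of Definition \ref{def:bbO} case by case. The commutation relations (i), (ii) hold because any two of the nonzero images $A \otimes 1$, $B \otimes 1$, $1 \otimes A$, $1 \otimes B$ lie in commuting tensor factors, while any relation involving $A_k$ or $B_k$ is trivial. The six $q$-Dolan/Grady relations (iii) split into two groups: the relations for $(A_i, B_j)$ and $(A_j, B_i)$ pull back to the defining $q$-Dolan/Grady relations for $(A, B)$ inside one tensor factor of $O_q \otimes O_q$, and the four relations involving $A_k$ or $B_k$ hold automatically because one generator is sent to zero, so that both sides vanish. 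Once $\psi$ is in hand, direct inspection of its action on the four generators $A \otimes 1$, $B \otimes 1$, $1 \otimes A$, $1 \otimes B$ shows $\psi \circ \varphi = \mathrm{id}$, and therefore $\varphi$ is injective.
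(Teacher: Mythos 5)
Your proposal is correct and follows essentially the same route as the paper: existence via the universal property of the tensor product (using Definition \ref{def:bbO}(i),(ii) for the cross-commutations), and injectivity via the retraction $\mathbb O_q \to O_q \otimes O_q$ that kills the third pair $A_k, B_k$ and composes with the original map to give the identity. The paper states the existence of this retraction more briefly (``by the form of the relations''), while you spell out the case-check, but the argument is the same.
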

\begin{proof}  The elements $A_i, B_j$ satisfy the $q$-Dolan/Grady relations.
The elements $A_j, B_i$ satisfy the $q$-Dolan/Grady relations.
Each of $A_i, B_j$ commutes with each of $A_j,B_i$. By these comments, the above homomorphism $O_q \otimes O_q \to \mathbb O_q$
 exists. We show that this homomorphism is injective.
Let $k \in \lbrace 1,2,3\rbrace \backslash \lbrace i,j\rbrace$.
By the form of the relations in Definition  \ref{def:bbO},
there exists an algebra homomorphism $\mathbb O_q \to O_q \otimes O_q$ that sends 
\begin{align*}
&A_i \mapsto A \otimes 1, \qquad \qquad A_j \mapsto 1 \otimes A, \qquad \qquad A_k\mapsto 0, \\
& B_i \mapsto 1 \otimes B, \qquad \qquad B_j \mapsto B \otimes 1, \qquad \qquad B_k \mapsto 0.
\end{align*}
The composition 
$O_q\otimes O_q \to \mathbb O_q \to O_q \otimes O_q$ fixes each of
\begin{align*}
A \otimes 1, \qquad \quad
B \otimes 1, \qquad \quad
1 \otimes A, \qquad \quad
1 \otimes B.
\end{align*}
Therefore, this composition is the identity map on $O_q \otimes O_q$. It follows that the homomorphism $O_q \otimes O_q \to \mathbb O_q$ is injective.
\end{proof}

\begin{lemma}\label{lem:OOO} There exists an algebra homomorphism $\mathbb O_q \to O_q \otimes O_q \otimes O_q$ that sends
\begin{align*}
&A_1 \mapsto A \otimes 1 \otimes 1, \qquad \qquad 
A_2 \mapsto 1 \otimes A \otimes 1, \qquad \qquad 
A_3  \mapsto  1\otimes 1 \otimes A, \\
& B_1 \mapsto 1 \otimes B \otimes 1, \qquad \qquad
B_2 \mapsto 1 \otimes 1 \otimes B, \qquad \qquad
B_3 \mapsto  B \otimes 1\otimes 1.
\end{align*}
This homomorphism is surjective.
\end{lemma}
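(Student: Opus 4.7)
The plan is to construct the map by defining it on the standard generators as in the statement, verify that the three families of defining relations in Definition \ref{def:bbO} are respected, and then read off surjectivity by inspection of the image.

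First I would check that the prescribed assignment extends to an algebra homomorphism. Looking at the six target elements, each of $A_1, B_3$ lives in the first tensor slot, each of $A_2, B_1$ lives in the second, and each of $A_3, B_2$ lives in the third. This pattern makes the verification essentially automatic: two elements lying in distinct tensor slots commute in $O_q \otimes O_q \otimes O_q$, so every relation of type (i) and (ii) in Definition \ref{def:bbO} is preserved, because in each case the two images lie in different slots. The same is true for three of the six ordered pairs $(i,j)$ with $i \neq j$ that appear in the type (iii) $q$-Dolan/Grady relations, namely $(1,2)$, $(2,3)$, $(3,1)$; for these pairs the images commute, so both sides of the $q$-Dolan/Grady relations vanish. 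The remaining three pairs $(1,3)$, $(2,1)$, $(3,2)$ place $A_i$ and $B_j$ in the same tensor slot (slots $1$, $2$, $3$ respectively), where they are the images of $A$ and $B$ from $O_q$ and hence already satisfy the $q$-Dolan/Grady relations by Example \ref{ex:Oq}. Thus all defining relations of $\mathbb O_q$ hold in the image, giving the desired homomorphism $\pi\colon \mathbb O_q \to O_q \otimes O_q \otimes O_q$.

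For surjectivity, I would observe that the image contains the pair $\{A \otimes 1 \otimes 1, B \otimes 1 \otimes 1\}$ (namely $\pi(A_1)$ and $\pi(B_3)$), and similarly the pairs $\{1 \otimes A \otimes 1, 1 \otimes B \otimes 1\}$ and $\{1 \otimes 1 \otimes A, 1 \otimes 1 \otimes B\}$ from the other two rows. Since $A$ and $B$ generate $O_q$, each of the three tensor-factor copies $O_q \otimes 1 \otimes 1$, $1 \otimes O_q \otimes 1$, $1 \otimes 1 \otimes O_q$ is contained in the image of $\pi$. The identity $(X \otimes 1 \otimes 1)(1 \otimes Y \otimes 1)(1 \otimes 1 \otimes Z) = X \otimes Y \otimes Z$ then shows that all elementary tensors are in the image, hence $\pi$ is surjective.

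There is no real obstacle in this argument — the choice of assignment is designed precisely so that the hexagon of Fig.~1 decomposes into the three disjoint edges $\{A_1,B_3\}$, $\{A_2,B_1\}$, $\{A_3,B_2\}$, each placed in its own tensor slot, which forces the nonadjacent commutation and adjacent $q$-Dolan/Grady relations to hold for free. The only mild point to articulate clearly is that the $q$-Dolan/Grady relations are trivially satisfied by commuting elements, so the three ``cross-slot'' adjacent pairs cause no issue.
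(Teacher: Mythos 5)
Your proposal is correct and follows the same route as the paper, whose proof simply asserts that the six tensor products satisfy the defining relations and that surjectivity follows since $A,B$ generate $O_q$; you have merely spelled out the slot-by-slot verification (which pairs land in distinct tensor factors and hence commute, which pairs share a factor and hence satisfy the $q$-Dolan/Grady relations) that the paper leaves implicit. No issues.
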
 
\begin{proof} The above six tensor products satisfy the defining relations for $\mathbb O_q$ given in Definition \ref{def:bbO}. Therefore the
homomorphism exists. The homomorphism is surjective  because $A, B$ generate $O_q$.
\end{proof}

\begin{lemma}\label{lem:OOO2} There exists an algebra homomorphism $\mathbb O_q \to O_q \otimes O_q \otimes O_q$ that sends
\begin{align*}
&A_1 \mapsto A \otimes 1 \otimes 1, \qquad \qquad 
A_2 \mapsto 1 \otimes A \otimes 1, \qquad \qquad 
A_3  \mapsto  1\otimes 1 \otimes A, \\
& B_1 \mapsto 1 \otimes 1 \otimes B, \qquad \qquad
B_2 \mapsto B \otimes 1 \otimes 1, \qquad \qquad
B_3 \mapsto  1 \otimes B\otimes 1.
\end{align*}
This homomorphism is surjective.
\end{lemma}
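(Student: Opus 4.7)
The plan is to imitate the proof of Lemma \ref{lem:OOO} essentially verbatim, with the only work being to check that the slightly different assignment of $B_1, B_2, B_3$ still respects the defining relations. Once the homomorphism is shown to exist, surjectivity will follow from the fact that $A, B$ generate $O_q$, exactly as in Lemma \ref{lem:OOO}.

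The first step is to name the six proposed tensors and verify the three families of relations from Definition \ref{def:bbO}. For the commutation relations (i), note that for distinct $i, j$ the images of $A_i, A_j$ (resp. $B_i, B_j$) are supported on disjoint tensor factors, so they commute. For the relations (ii), one checks case by case that $A_i$ and $B_i$ are again supported on disjoint factors: $A_1 \mapsto A \otimes 1 \otimes 1$ versus $B_1 \mapsto 1 \otimes 1 \otimes B$, $A_2 \mapsto 1 \otimes A \otimes 1$ versus $B_2 \mapsto B \otimes 1 \otimes 1$, and $A_3 \mapsto 1 \otimes 1 \otimes A$ versus $B_3 \mapsto 1 \otimes B \otimes 1$. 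In each case the images commute.

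The main thing to watch is relation (iii), the $q$-Dolan/Grady relations for distinct $i, j$. There are six ordered pairs $(i, j)$ to consider. In three of them the images of $A_i$ and $B_j$ live in a common tensor factor and the relations hold because $A, B$ satisfy the $q$-Dolan/Grady relations in $O_q$: namely $(A_1, B_2)$ in the first factor, $(A_2, B_3)$ in the second factor, and $(A_3, B_1)$ in the third factor. In the remaining three pairs $(A_1, B_3), (A_2, B_1), (A_3, B_2)$, the images live in disjoint tensor factors and thus commute, in which case every term on each side of the $q$-Dolan/Grady relations vanishes. This establishes that the homomorphism exists.

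For surjectivity, observe that the images of $A_1, B_2$ generate $O_q \otimes 1 \otimes 1$, the images of $A_2, B_3$ generate $1 \otimes O_q \otimes 1$, and the images of $A_3, B_1$ generate $1 \otimes 1 \otimes O_q$; together these subalgebras generate $O_q \otimes O_q \otimes O_q$. I expect no serious obstacle here: the only thing to be careful about is the bookkeeping that ensures each of the six $A_i, B_j$ pairings with $i \neq j$ falls into exactly one of the two easy cases (same factor, giving $q$-Dolan/Grady; different factors, giving commutation that subsumes $q$-Dolan/Grady).
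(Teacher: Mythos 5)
Your proposal is correct and takes essentially the same approach as the paper, which simply notes (as in Lemma \ref{lem:OOO}) that the six tensors satisfy the defining relations of Definition \ref{def:bbO} and that surjectivity follows because $A,B$ generate $O_q$. Your case analysis of which pairs $(A_i,B_j)$ land in a common tensor factor versus disjoint factors is the correct bookkeeping that the paper leaves implicit.
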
 
\begin{proof} Similar to the proof of Lemma \ref{lem:OOO}.
\end{proof}

\begin{corollary} \label{cor:AAABBB} {\rm (See \cite[Lemma~4.8]{S3}.)} The elements $1, A_1,A_2,A_3,B_1,B_2, B_3$ are linearly independent.
\end{corollary}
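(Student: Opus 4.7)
The plan is to pass from $\mathbb{O}_q$ to a commutative polynomial ring, where linear independence becomes trivial, by composing two homomorphisms already established in the paper. The point is that any nonzero linear relation among $1, A_1, A_2, A_3, B_1, B_2, B_3$ in $\mathbb{O}_q$ would descend to a nonzero linear relation among their images under any algebra homomorphism, so it suffices to exhibit one homomorphism under which the seven images are visibly linearly independent.

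First, I would apply the surjective homomorphism $\pi : \mathbb{O}_q \to O_q \otimes O_q \otimes O_q$ from Lemma \ref{lem:OOO}. This sends $1 \mapsto 1 \otimes 1 \otimes 1$ and sends each $A_i$ and $B_i$ to one of the six ``monomial'' tensors $A \otimes 1 \otimes 1$, $1 \otimes A \otimes 1$, $1 \otimes 1 \otimes A$, $1 \otimes B \otimes 1$, $1 \otimes 1 \otimes B$, $B \otimes 1 \otimes 1$. Next, I would use the algebra homomorphism $\varphi : O_q \to \mathbb{F}[\lambda_1, \lambda_2]$ (noted in Section 3 as a consequence of sending $A \mapsto \lambda_1$, $B \mapsto \lambda_2$) to form the tensor product homomorphism $\varphi \otimes \varphi \otimes \varphi : O_q \otimes O_q \otimes O_q \to \mathbb{F}[\lambda_1, \lambda_2]^{\otimes 3}$, which I identify with the polynomial ring $\mathbb{F}[x_1, y_1, x_2, y_2, x_3, y_3]$ in six commuting indeterminates.

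Composing, the seven elements $1, A_1, A_2, A_3, B_1, B_2, B_3$ of $\mathbb{O}_q$ are mapped to
\begin{align*}
1, \quad x_1, \quad x_2, \quad x_3, \quad y_2, \quad y_3, \quad y_1
\end{align*}
in $\mathbb{F}[x_1, y_1, x_2, y_2, x_3, y_3]$. These are the constant $1$ together with six distinct indeterminates, hence obviously linearly independent over $\mathbb{F}$. Pulling back, the original seven elements are linearly independent in $\mathbb{O}_q$, which is the assertion.

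There is no real obstacle: the only thing to check is that the composite homomorphism is well defined, and this is immediate from Lemma \ref{lem:OOO} together with the existence of $\varphi$ already recorded in Section 3. (If one prefers, the homomorphism of Lemma \ref{lem:OOO2} works equally well and gives the same conclusion after a harmless relabelling of which $y_i$ is the image of which $B_j$.)
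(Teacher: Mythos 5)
Your proof is correct and takes essentially the same route as the paper: both arguments push the seven elements through the homomorphism of Lemma \ref{lem:OOO} (or \ref{lem:OOO2}) and observe that their images are linearly independent. The only difference is in the last step — the paper stops in $O_q \otimes O_q \otimes O_q$ and invokes the linear independence of $1, A, B$ in $O_q$, whereas you compose further with $\varphi^{\otimes 3}$ to land in a polynomial ring where the independence is manifest; both verifications ultimately rest on the same homomorphism $O_q \to \mathbb{F}[\lambda_1,\lambda_2]$ recorded in Section 3.
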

\begin{proof} The elements $1, A,B$ are linearly independent in $O_q$, so the following elements are linearly
independent in  $O_q \otimes O_q \otimes O_q$:
\begin{align*}
1 \otimes 1 \otimes 1,& \qquad \qquad A \otimes 1 \otimes 1, \qquad \qquad 
1 \otimes A \otimes 1, \qquad \qquad 
 1\otimes 1 \otimes A, \\
&  B \otimes 1\otimes 1, \qquad \qquad 1 \otimes B \otimes 1, \qquad \qquad
 1 \otimes 1 \otimes B.
\end{align*}
The result follows from this and Lemma \ref{lem:OOO} or \ref{lem:OOO2}.
\end{proof}
\noindent Next, we consider the automorphisms of $\mathbb O_q$. 
 The dihedral group $D_6$ is the group of symmetries of a regular hexagon. The group $D_6$ has 12 elements.
 The action of $D_6$ on the vertices of the regular hexagon induces a group homomorphism $D_6 \to {\rm Aut}(\mathbb O_q)$.
 This homomorphism is injective in view of Corollary \ref{cor:AAABBB}. Thus the group ${\rm Aut}(\mathbb O_q)$ contains a subgroup
 isomorphic to $D_6$. In the next section, we consider some additional elements in ${\rm Aut}(\mathbb O_q)$.

\section{The Lusztig automorphisms of  $\mathbb O_q$}
\noindent In this section, we introduce the Lusztig automorphisms of $\mathbb O_q$.

 \begin{theorem}\label{thm:main1} For $i \in \lbrace 1,2,3\rbrace$ there exist  $L_i, L^*_i \in {\rm Aut}(\mathbb O_q)$ that do the following:
 \begin{align*}
 &L_i(A_1) = A_1, \qquad   \quad  L_i(A_2) = A_2, \qquad \quad L_i(A_3) = A_3,    \quad     \qquad L_i(B_i) = B_i,\\
 &L^*_i(B_1) = B_1, \qquad \quad     L^*_i(B_2) = B_2, \qquad \quad  L^*_i(B_3) = B_3, \qquad \quad      L^*_i(A_i) = A_i
 \end{align*}
 \noindent and for $j \in \lbrace 1,2,3\rbrace\backslash \lbrace i \rbrace$,
 \begin{align*}
 & L_i(B_j) = B_j+ \frac{q A_i^2 B_j - (q+q^{-1}) A_i B_j A_i + q^{-1} B_j A_i^2}{(q-q^{-1})(q^2-q^{-2})},\\
 & L_i^{-1}(B_j) = B_j+ \frac{q^{-1} A_i^2 B_j - (q+q^{-1}) A_i B_j A_i+ q B_j A_i^2}{(q-q^{-1})(q^2-q^{-2})}, \\
  & L^*_i(A_j) = A_j+ \frac{q B_i^2 A_j - (q+q^{-1}) B_i A_j B_i + q^{-1} A_j B_i^2}{(q-q^{-1})(q^2-q^{-2})},\\
 & (L^*_i)^{-1}(A_j) = A_j+ \frac{q^{-1} B_i^2 A_j - (q+q^{-1}) B_i A_j B_i+ q A_j B_i^2}{(q-q^{-1})(q^2-q^{-2})}.
\end{align*}

 \end{theorem}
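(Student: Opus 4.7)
The plan is to define $L_i$ and $L_i^{-1}$ explicitly by their action on the six standard generators of $\mathbb O_q$ and then verify that each such assignment extends to an algebra endomorphism of $\mathbb O_q$, by checking that the defining relations of Definition \ref{def:bbO} hold for the images; the identities $L_i L_i^{-1}=L_i^{-1}L_i=\mathrm{id}$ are then checked on generators. The $D_6$-symmetry of $\mathbb O_q$ discussed at the end of Section 4 reduces the problem to the single case $L_1$: the other $L_i$ arise from $L_1$ by conjugation with rotational symmetries of the hexagon, and each $L_i^*$ arises from $L_i$ by conjugation with the $180^\circ$ rotation swapping $A_k\leftrightarrow B_k$ for every $k$. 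Writing $C=(q-q^{-1})(q^2-q^{-2})$, the proposed $L_1$ fixes $A_1,A_2,A_3,B_1$ and sends $B_j\mapsto B_j+P_j$ for $j\in\{2,3\}$, where $P_j=C^{-1}(qA_1^2 B_j-(q+q^{-1})A_1 B_j A_1+q^{-1}B_j A_1^2)$ lies in the $O_q$-subalgebra $\langle A_1,B_j\rangle\subseteq\mathbb O_q$.

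The routine relations come first. All commutators among the $A$'s are preserved since each $A_k$ is fixed. The relations $[B_1,L_1(B_j)]=0$ for $j\in\{2,3\}$ follow because $B_1$ commutes with both $A_1$ and $B_j$, hence with $P_j$. Similarly $[A_k,L_1(B_k)]=0$ for $k\in\{2,3\}$, because $A_k$ commutes with $A_1$ and with $B_k$, hence with $P_k$. Both $q$-Dolan/Grady relations for $(A_1,L_1(B_j))$ with $j\in\{2,3\}$ follow because $L_1$ restricted to $\langle A_1,B_j\rangle\cong O_q$ coincides with the $O_q$-Lusztig automorphism $L$ from Section 3, which preserves these relations by construction.

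For the $q$-Dolan/Grady relations involving $(A_k,L_1(B_j))$ with distinct $k,j\in\{2,3\}$, the main tool is a commute-through identity exploiting $[A_1,A_k]=0$. Writing $P_j=\sum_{a,b}c_{ab}A_1^a B_j A_1^b$, each $A_k$ slides freely past any $A_1$-factor inside such a monomial. Applied to the first (cubic-in-$A_k$, linear-in-$B$) $q$-Dolan/Grady expression, this reduces the contribution of each monomial $A_1^a B_j A_1^b$ to $A_1^a(q^2-q^{-2})^2[B_j,A_k]A_1^b$ via the $(A_k,B_j)$ relation, and the sum recombines to $(q^2-q^{-2})^2[B_j+P_j,A_k]$, as required. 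The second (cubic-in-$B$, linear-in-$A_k$) $q$-Dolan/Grady expression is handled by direct expansion of $(B_j+P_j)^n$ into monomials in $B_j$ and $P_j$, applying the $(A_k,B_j)$ relation and the commutativity $[A_1,A_k]=0$; the cancellations are mechanical though tedious.

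The final and most delicate identity is $[L_1(B_2),L_1(B_3)]=0$. It is verified by direct expansion: writing $[B_2+P_2,B_3+P_3]=[B_2,P_3]+[P_2,B_3]+[P_2,P_3]$ (using $[B_2,B_3]=0$), each summand is a polynomial in $A_1,B_2,B_3$ that, after applying the $q$-Dolan/Grady relations for $(A_1,B_2),(A_1,B_3)$ together with $[B_2,B_3]=0$, cancels termwise. The inverse $L_1^{-1}$ is constructed by the same scheme (with $q\leftrightarrow q^{-1}$ in the numerator of $P_j$), and $L_1 L_1^{-1}=L_1^{-1}L_1=\mathrm{id}$ reduces on each generator to the $O_q$-identity for $L,L^{-1}$ within the appropriate $\langle A_1,B_j\rangle$. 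The main obstacle is this final direct computation, since no two-generator $O_q$-subalgebra of $\mathbb O_q$ contains all three of $A_1,B_2,B_3$, so the $O_q$-Lusztig theory cannot be invoked to bypass it.
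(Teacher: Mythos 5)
Your overall strategy (define $L_1$ on the six standard generators, verify that the defining relations of Definition \ref{def:bbO} are preserved by the images, and obtain the other five automorphisms from $L_1$ by conjugating with the $D_6$-symmetries of the hexagon) is legitimate in principle, and the symmetry reduction together with the easy verifications are fine. However, the two verifications that you yourself flag as the hard ones are exactly where the argument has a genuine gap: they are asserted rather than proved, and the mechanisms you describe do not obviously work. For the cubic-in-$B$ $q$-Dolan/Grady relation for the pair $(A_k, L_1(B_j))$ with $\lbrace k,j\rbrace=\lbrace 2,3\rbrace$, expanding $(B_j+P_j)^3$ produces words such as $B_jA_1^{a}B_jA_1^{b}B_jA_k$ in which the three $B_j$'s are separated by powers of $A_1$; sliding $A_k$ past the $A_1$'s does not bring these into the shape $B_j^3A_k-\lbrack 3\rbrack_q B_j^2A_kB_j+\cdots$ to which the $(A_k,B_j)$ relation applies, so ``mechanical though tedious'' is not a proof. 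For $[L_1(B_2),L_1(B_3)]=0$, the piece $[B_2,P_3]+[P_2,B_3]$ has degree $2$ in $A_1$ while the only available relations involving $A_1$ have degree $3$ in $A_1$; you give no indication of how the degree-$4$ term $[P_2,P_3]$ conspires with these to produce ``termwise'' cancellation, and (as you note) no two-generator $O_q$-subalgebra contains all of $A_1,B_2,B_3$, so the $O_q$ theory cannot be invoked. Until these two identities are actually established, you have not shown that the assignment extends to an endomorphism of $\mathbb O_q$, let alone an automorphism.

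The paper avoids all of this by citing the general machinery of \cite{LusTer}: there, for an arbitrary algebra $\mathcal A$ with a distinguished element $A$, a Lusztig automorphism is constructed on any algebra generated by elements that either commute with $A$ or satisfy the relation \eqref{eq:AAAX}, in a way that is automatically multiplicative. Taking $\mathcal A=\mathbb O_q$ and $A=A_i$, one only has to check, generator by generator, that each standard generator either commutes with $A_i$ or satisfies \eqref{eq:AAAX} --- which is immediate from Definition \ref{def:bbO} --- and the existence of $L_i\in{\rm Aut}(\mathbb O_q)$ together with the displayed formulas for $L_i^{\pm 1}$ follows at once. If you wish to keep your direct approach, you must either carry out the two computations above in full or import the structural results of \cite{LusTer} that make them unnecessary.
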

 \begin{proof} We first verify our assertions about $L_i$. We invoke \cite{LusTer}. In \cite[Section~3]{LusTer} we encounter an algebra $\mathcal A$ and a distinguished  element $A \in \mathcal A$; we take $\mathcal A$ to be $\mathbb O_q$ and the distinguished element to be $A_i$. 
  Let $X$ denote a standard generator for $\mathbb O_q$.
 By Definition \ref{def:bbO}, either $X$ commutes with $A_i$ or
 \begin{align} \label{eq:AAAX}
 A_i^3 X - \lbrack 3 \rbrack_q  A_i^2 X A_i + \lbrack 3 \rbrack_q A_i X A_i^2 - X A_i^3 =
 (q^2-q^{-2})^2 (X A_i - A_i X).
 \end{align}
 By these comments and \cite[Sections~4,~5]{LusTer}  there exists $L_i \in {\rm Aut}(\mathbb O_q)$ that does the following:
(i) for all $X \in \mathbb O_q$ that commutes with $A_i$, we have  $L_i(X)=X$; (ii) for all $X \in \mathbb O_q$ that satisfies \eqref{eq:AAAX}, we have
 \begin{align*}
 &L_i(X) = X+ \frac{q A_i^2 X - (q+q^{-1}) A_i X A_i + q^{-1} X A_i^2}{(q-q^{-1})(q^2-q^{-2})}, \\
 & L_i^{-1}(X) = X+ \frac{q^{-1} A_i^2 X - (q+q^{-1}) A_i X A_i+ q X A_i^2}{(q-q^{-1})(q^2-q^{-2})}.
 \end{align*}
 The element $A_i$ commutes with each of $A_1, A_2, A_3, B_i$. 
 We have verified our assertions about $L_i$. Our assertions about $L^*_i$ are similarly verified.
 \end{proof}
 
\begin{definition} \label{def:LA} \rm Referring to Theorem \ref{thm:main1}, we call $L_i$ (resp. $L^*_i$) 
the {\it Lusztig automorphism of $\mathbb O_q$ associated with $A_i$ (resp. $B_i$)}.
\end{definition}
\noindent We record a feature of the Lusztig automorphisms that appeared in the proof of 
Theorem \ref{thm:main1}. 

\begin{lemma} \label{lem:FeatureL} The following hold for all $i \in \lbrace 1,2,3\rbrace$ and $X \in \mathbb O_q$.
\begin{enumerate}
\item[\rm (i)] Assume that $X$ commutes with $A_i$. The $L_i(X)=X$.
\item[\rm (ii)]  Assume that
\begin{align*} 
 A_i^3 X - \lbrack 3 \rbrack_q  A_i^2 X A_i + \lbrack 3 \rbrack_q A_i X A_i^2 - X A_i^3 =
 (q^2-q^{-2})^2 (X A_i - A_i X).
 \end{align*}
 Then 
  \begin{align*}
 &L_i(X) = X+ \frac{q A_i^2 X - (q+q^{-1}) A_i X A_i + q^{-1} X A_i^2}{(q-q^{-1})(q^2-q^{-2})}, \\
 & L_i^{-1}(X) = X+ \frac{q^{-1} A_i^2 X - (q+q^{-1}) A_i X A_i+ q X A_i^2}{(q-q^{-1})(q^2-q^{-2})}.
 \end{align*}
\end{enumerate}
\end{lemma}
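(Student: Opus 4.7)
The content of this lemma is precisely what was established during the invocation of \cite{LusTer} in the proof of Theorem \ref{thm:main1}, just extracted from a statement about the standard generators to a statement about arbitrary $X \in \mathbb O_q$. My plan is therefore to give a short proof that defers to that earlier reasoning and to \cite[Sections~4,~5]{LusTer}.

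First, I would recall the setup from the proof of Theorem \ref{thm:main1}: one takes the algebra $\mathcal A$ of \cite[Section~3]{LusTer} to be $\mathbb O_q$, and the distinguished element to be $A_i$. With this identification, \cite[Sections~4,~5]{LusTer} furnish an automorphism $L_i \in {\rm Aut}(\mathbb O_q)$ whose defining properties are exactly: (a) $L_i$ fixes every $X \in \mathbb O_q$ that commutes with $A_i$; (b) $L_i$ acts on every $X \in \mathbb O_q$ satisfying the tridiagonal relation
\begin{align*}
 A_i^3 X - \lbrack 3 \rbrack_q  A_i^2 X A_i + \lbrack 3 \rbrack_q A_i X A_i^2 - X A_i^3 =
 (q^2-q^{-2})^2 (X A_i - A_i X)
\end{align*}
by the displayed formula, with the parallel formula for $L_i^{-1}$. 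These are properties (i) and (ii) of the present lemma.

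The plan, then, is simply to invoke this: part (i) follows from property (a), and part (ii) follows from property (b). I do not foresee any technical obstacle; the only thing to be careful about is that the statement of the lemma is universally quantified over $X \in \mathbb O_q$, so I must emphasize that the results of \cite[Sections~4,~5]{LusTer} apply to arbitrary elements of $\mathcal A$ (not just the distinguished generators of $\mathbb O_q$), which is exactly how they were used in the proof of Theorem \ref{thm:main1} when checking the action of $L_i$ on each standard generator. Thus the proof reduces to a one-line reference to the preceding proof, and the lemma should be viewed as isolating, for later reuse in Section 5 and beyond, the two defining features of $L_i$ that the Lusztig machinery of \cite{LusTer} actually provides.
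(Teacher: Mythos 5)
Your proposal is correct and matches the paper exactly: the paper's proof of this lemma is the one-line ``Immediate from the proof of Theorem \ref{thm:main1},'' and your observation that the properties furnished by \cite[Sections~4,~5]{LusTer} apply to arbitrary $X \in \mathbb O_q$ (not just the standard generators) is precisely the point being isolated for later reuse.
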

\begin{proof} Immediate from the proof of Theorem \ref{thm:main1}.
\end{proof}

\begin{lemma} \label{lem:FeatureLs} The following hold for all $i \in \lbrace 1,2,3\rbrace$ and $X \in \mathbb O_q$.
\begin{enumerate}
\item[\rm (i)] Assume that $X$ commutes with $B_i$. The $L^*_i(X)=X$.
\item[\rm (ii)]  Assume that
\begin{align*} 
 B_i^3 X - \lbrack 3 \rbrack_q  B_i^2 X B_i + \lbrack 3 \rbrack_q B_i X B_i^2 - X B_i^3 =
 (q^2-q^{-2})^2 (X B_i - B_i X).
 \end{align*}
 Then 
  \begin{align*}
 &L^*_i(X) = X+ \frac{q B_i^2 X - (q+q^{-1}) B_i X B_i + q^{-1} X B_i^2}{(q-q^{-1})(q^2-q^{-2})}, \\
 & (L^*_i)^{-1}(X) = X+ \frac{q^{-1} B_i^2 X - (q+q^{-1}) B_i X B_i+ q X B_i^2}{(q-q^{-1})(q^2-q^{-2})}.
 \end{align*}
\end{enumerate}
\end{lemma}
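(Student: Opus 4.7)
The plan is to mirror the argument used for $L_i$ inside the proof of Theorem \ref{thm:main1}, with the roles of $A$ and $B$ interchanged. That proof invoked \cite[Sections~4,~5]{LusTer} with $\mathcal A = \mathbb O_q$ and distinguished element $A_i$; here I would instead take the distinguished element to be $B_i$. The defining relations of $\mathbb O_q$ from Definition \ref{def:bbO} are symmetric in the two types of standard generators (they occupy structurally identical positions at opposite vertices of the hexagon in Fig.~1), so the same abstract machinery from \cite{LusTer} applies verbatim.

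First I would verify that $B_i$ satisfies the hypotheses required by the \cite{LusTer} construction by running through the standard generators $X$: by Definition \ref{def:bbO}(i), $B_i$ commutes with $B_j$ for $j \neq i$; by Definition \ref{def:bbO}(ii), $B_i$ commutes with $A_i$; and by Definition \ref{def:bbO}(iii), $B_i$ and $A_j$ satisfy the $q$-Dolan/Grady relations for $j \in \lbrace 1,2,3\rbrace \setminus \lbrace i \rbrace$. Hence every standard generator of $\mathbb O_q$ either commutes with $B_i$ or satisfies the $q$-Dolan/Grady relation with $B_i$, precisely as required to apply the construction.

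Next, \cite[Sections~4,~5]{LusTer} then produces an automorphism of $\mathbb O_q$ — which must coincide with the $L^*_i$ already delivered by Theorem \ref{thm:main1}, since both agree on the standard generators — with the following stronger property: for \emph{every} $X \in \mathbb O_q$ (not merely the standard generators) satisfying one of the two hypotheses, the images $L^*_i(X)$ and $(L^*_i)^{-1}(X)$ are given by the formulas in the statement. This yields both parts of the lemma simultaneously, so the proof reduces to citing the construction.

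There is essentially no obstacle: the $A$-side analogue (Lemma \ref{lem:FeatureL}) was obtained in exactly this way and explicitly labelled as ``Immediate from the proof of Theorem \ref{thm:main1},'' and the $B$-side is strictly parallel. The only point warranting care is confirming that the \cite{LusTer} construction is stated so as to apply to arbitrary $X \in \mathbb O_q$ satisfying the commutation or $q$-Dolan/Grady hypothesis, rather than only to a preselected list of generators; this is the same fact already used in establishing Lemma \ref{lem:FeatureL}, so no additional verification is needed.
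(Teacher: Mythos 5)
Your proposal is correct and follows essentially the same route as the paper: the paper's proof is ``Immediate from the proof of Theorem \ref{thm:main1},'' whose $L^*_i$ assertions are verified exactly as you describe, by applying the construction of \cite{LusTer} with distinguished element $B_i$ in place of $A_i$ and noting that the resulting formulas hold for every $X \in \mathbb O_q$ satisfying the commutation or $q$-Dolan/Grady hypothesis. No gaps.
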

\begin{proof} Immediate from the proof of Theorem \ref{thm:main1}.
\end{proof}

\noindent Next, we describe how the Lusztig automorphisms of $O_q$ are related to the Lusztig automorphisms of $\mathbb O_q$.

\begin{lemma} \label{lem:Injera}
Pick distinct $i,j \in \lbrace 1,2,3\rbrace$ and consider the injective algebra homomorphism $\sigma_{i,j}: O_q \to \mathbb O_q$ that sends
$A \mapsto A_i$ and $B \mapsto B_j$. Then the following diagrams commute:
\begin{equation*}
{\begin{CD}
O_q @>\sigma_{i,j} >>
               \mathbb O_q
              \\
         @V L VV                   @VV L_i V \\
         O_q @>>\sigma_{i,j} >
                                  \mathbb O_q
                        \end{CD}} 
                        \qquad \qquad \quad
    {\begin{CD}
O_q @>\sigma_{i,j} >>
               \mathbb O_q
              \\
         @V L^* VV                   @VV L^*_jV \\
         O_q @>>\sigma_{i,j} >
                                  \mathbb O_q
                        \end{CD}} 
\end{equation*}
\end{lemma}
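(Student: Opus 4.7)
The plan is to observe that $O_q$ is generated by $A$ and $B$, so the commutativity of each diagram reduces to checking the equality of the two compositions on these two elements. Once this is done, the result follows from the universal property of the generators.

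For the left diagram, I would first check the claim on $A$. We have $L(A)=A$ by the definition of the Lusztig automorphism of $O_q$, so $\sigma_{i,j}(L(A))=\sigma_{i,j}(A)=A_i$. On the other side, $L_i(\sigma_{i,j}(A))=L_i(A_i)=A_i$ by Theorem \ref{thm:main1}. The two compositions agree on $A$. Next I would check the claim on $B$. By the formula for $L$ in Section 3,
\begin{align*}
\sigma_{i,j}(L(B)) = \sigma_{i,j}\!\left(B+\frac{qA^2B-(q+q^{-1})ABA+q^{-1}BA^2}{(q-q^{-1})(q^2-q^{-2})}\right),
\end{align*}
and applying $\sigma_{i,j}$ termwise (using $A\mapsto A_i$, $B\mapsto B_j$) gives
\begin{align*}
\sigma_{i,j}(L(B)) = B_j+\frac{qA_i^2B_j-(q+q^{-1})A_iB_jA_i+q^{-1}B_jA_i^2}{(q-q^{-1})(q^2-q^{-2})}.
\end{align*}
Since $j\neq i$, the formula in Theorem \ref{thm:main1} gives exactly the same expression for $L_i(B_j)=L_i(\sigma_{i,j}(B))$. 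Hence the two compositions agree on $B$ as well, and therefore on all of $O_q$.

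The right diagram is handled analogously, with the roles of $A$ and $B$ interchanged. The equality on $B$ is immediate: $\sigma_{i,j}(L^*(B))=\sigma_{i,j}(B)=B_j$ and $L^*_j(\sigma_{i,j}(B))=L^*_j(B_j)=B_j$ by Theorem \ref{thm:main1}. For the equality on $A$, one applies $\sigma_{i,j}$ to the explicit formula for $L^*(A)$, again mapping $A\mapsto A_i$ and $B\mapsto B_j$, and compares with the formula for $L^*_j(A_i)$ given in Theorem \ref{thm:main1} (which is applicable since $i\neq j$). Both yield
\begin{align*}
A_i+\frac{qB_j^2A_i-(q+q^{-1})B_jA_iB_j+q^{-1}A_iB_j^2}{(q-q^{-1})(q^2-q^{-2})}.
\end{align*}

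There is really no obstacle: the formulas defining $L$ and $L^*$ on $O_q$ were built to match the formulas defining $L_i$ and $L^*_i$ on $\mathbb{O}_q$, so the proof amounts to noting that $\sigma_{i,j}$ sends the relevant generators to the relevant generators and then comparing formulas side by side. The only thing one must be careful about is using the correct index on the right: it is $L_i$ (not $L_j$) that pairs with $A\mapsto A_i$, and $L^*_j$ (not $L^*_i$) that pairs with $B\mapsto B_j$, as reflected in the statement.
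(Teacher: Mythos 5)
Your proof is correct and is exactly the argument the paper gives (the paper simply says to chase the generators $A,B$ around each diagram using the definitions of $L,L^*$ and Theorem \ref{thm:main1}); you have just written out the generator chase explicitly, including the correct index pairing $L\leftrightarrow L_i$ and $L^*\leftrightarrow L^*_j$.
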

\begin{proof} Chase the $O_q$-generators $A,B$ around each diagram, using the definition of $L, L^*$ and
Theorem \ref{thm:main1}. 
\end{proof}

\noindent Next, we describe how the Lusztig automorphisms of $\mathbb O_q$ are related to each other.

\begin{theorem}\label{thm:SixLA} The Lusztig automorphisms of $\mathbb O_q$ are related as follows.
\begin{enumerate}
\item[\rm (i)] For distinct $i,j \in \lbrace 1,2,3\rbrace$,
\begin{align*}
L_i L_j = L_j L_i, \qquad \qquad L^*_i L^*_j = L^*_j L^*_i.
\end{align*}
\item[\rm (ii)] For $i \in \lbrace 1,2,3\rbrace$,
\begin{align*}
L_i L^*_i = L^*_i L_i.
\end{align*}
\item[\rm (iii)] For distinct $i,j \in \lbrace 1,2,3 \rbrace$, 
\begin{center}$L^{\pm 1}_i, (L^*_j)^{\pm 1}$ freely generate a subgroup of ${\rm Aut}(\mathbb O_q)$. \end{center}
\end{enumerate}
\end{theorem}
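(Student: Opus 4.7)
The plan is to handle parts (i) and (ii) by direct verification on the six standard generators of $\mathbb O_q$, since two algebra automorphisms coincide once they agree on a generating set. For (i), both $L_i$ and $L_j$ fix each of $A_1, A_2, A_3$ by Theorem~\ref{thm:main1}. On $B_i$, the map $L_i$ is the identity, and $L_j(B_i)$ is a polynomial in $A_j$ and $B_i$ (both fixed by $L_i$, using $[A_i,A_j]=0$ and $[A_i,B_i]=0$); the case of $B_j$ is symmetric. The only substantive case is $B_k$ for the third index $k \in \{1,2,3\} \setminus \{i,j\}$. Writing $F_r(X) = q A_r^2 X - (q+q^{-1}) A_r X A_r + q^{-1} X A_r^2$ and $d = (q-q^{-1})(q^2-q^{-2})$, and using Lemma~\ref{lem:FeatureL}(ii) together with the algebra-homomorphism property, a short expansion yields
\[
L_iL_j(B_k) - L_jL_i(B_k) \;=\; \frac{F_jF_i(B_k) - F_iF_j(B_k)}{d^{2}}.
\]
I would then verify $F_iF_j(B_k) = F_jF_i(B_k)$ by expanding both sides as nine monomials of the form $A_r^a A_s^b B_k A_s^c A_r^d$ and commuting $A_i$ past $A_j$. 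The $L^*$-version of (i) is identical with the roles of the $A$'s and $B$'s swapped, using Lemma~\ref{lem:FeatureLs} in place of Lemma~\ref{lem:FeatureL}.

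For (ii), on each standard generator at least one of $L_i, L^*_i$ acts as the identity: $L_i$ fixes $A_1,A_2,A_3$ and $B_i$, while $L^*_i$ fixes $B_1,B_2,B_3$ and $A_i$. On the remaining generator, the nontrivial image under one map is a polynomial in two generators that the other map fixes. So the two compositions agree on inspection, with no residual computation required.

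For (iii), the plan is to combine the free generation of $\langle L^{\pm 1}, (L^*)^{\pm 1} \rangle \subset \mathrm{Aut}(O_q)$ recorded in Section~3 with the injective algebra homomorphism $\sigma_{i,j} : O_q \to \mathbb O_q$ and the two commutative squares of Lemma~\ref{lem:Injera}. Iterating these squares, together with their consequences for inverses, gives, for every word $w(x,y)$ in letters $x^{\pm 1}, y^{\pm 1}$,
\[
\sigma_{i,j} \circ w(L, L^*) \;=\; w(L_i, L^*_j) \circ \sigma_{i,j}.
\]
If $w(L_i, L^*_j) = \mathrm{id}$, then $\sigma_{i,j} \circ w(L, L^*) = \sigma_{i,j}$, and injectivity of $\sigma_{i,j}$ forces $w(L, L^*) = \mathrm{id}$; freeness of the subgroup generated by $L^{\pm 1}, (L^*)^{\pm 1}$ in $\mathrm{Aut}(O_q)$ (see Note~\ref{note:global}) then forces $w$ to be the empty reduced word.

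I expect the main obstacle to be the nine-monomial verification of $F_iF_j(B_k) = F_jF_i(B_k)$ in part (i), which is entirely bookkeeping but reduces cleanly to $[A_i, A_j] = 0$ (and analogously to $[B_i,B_j]=0$ for the $L^*$-statement). Parts (ii) and (iii) involve no genuine calculation once Theorem~\ref{thm:main1} and Lemma~\ref{lem:Injera} are in hand.
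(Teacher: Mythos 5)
Your proposal is correct, and for parts (ii) and (iii) it follows the paper's argument essentially verbatim: (ii) is the same inspection on generators, and (iii) is exactly the paper's (tersely stated) combination of the free generation of $\langle L^{\pm 1},(L^*)^{\pm 1}\rangle$ in ${\rm Aut}(O_q)$ with the intertwining squares of Lemma \ref{lem:Injera} and the injectivity of $\sigma_{i,j}$ — you merely spell out the induction on word length that the paper leaves implicit. The one place you diverge is the only nontrivial case of part (i), namely $X=B_k$ for the third index $k$. You expand both compositions to $B_k+F_i(B_k)/d+F_j(B_k)/d+F_\bullet F_\bullet(B_k)/d^2$ and reduce to the operator identity $F_iF_j=F_jF_i$, which holds because left and right multiplications by the commuting elements $A_i,A_j$ all commute with one another; this is a legitimate, entirely elementary finish, and the "nine-monomial" check is really just that observation. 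The paper instead avoids any expansion of the degree-two term: it applies $L_i$ to the $q$-Dolan/Grady relation between $A_j$ and $B_k$ to conclude that the element $L_i(B_k)$ itself satisfies that relation with $A_j$, and then invokes Lemma \ref{lem:FeatureL}(ii) with $X=L_i(B_k)$ to read off $L_jL_i(B_k)$ directly in closed form, matching it against $L_i$ applied to the formula for $L_j(B_k)$. Your route is more self-contained (it never applies the closed-form Lusztig formula to anything other than a standard generator), while the paper's route buys a computation-free argument that exhibits Lemma \ref{lem:FeatureL} as the reusable tool it is intended to be. Both are complete proofs.
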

\begin{proof} (i) We first show that $L_i L_j = L_jL_i$.
To do this, we show that 
 \begin{align} \label{eq:LLX}
           L_i L_j(X) = L_jL_i(X)
\end{align}           
            for every standard generator $X$  of $\mathbb O_q$.
First assume that $X$ is one of $A_1, A_2, A_3$. Then \eqref{eq:LLX} holds because $L_i(X)=X$ and $L_j(X)=X$.
Next assume that $X=B_i$.
Let us compute $L_iL_j(B_i)$.
 By Theorem \ref{thm:main1},
\begin{align} \label{eq:LJB}
 L_j(B_i) = B_i+ \frac{q A_j^2 B_i - (q+q^{-1}) A_jB_i A_j + q^{-1} B_i A_j^2}{(q-q^{-1})(q^2-q^{-2})}.
\end{align}
We  apply $L_i$ to each side of \eqref{eq:LJB}, and evaluate the result using $L_i(A_j)=A_j$ and $L_i(B_i)=B_i$; this yields
$L_iL_j(B_i)=L_j(B_i)$. We just mentioned that $L_i(B_i)=B_i$, so $L_jL_i(B_i) = L_j(B_i)$. By these comments,
 \eqref{eq:LLX} holds for $X=B_i$.
One similarly shows that \eqref{eq:LLX} holds for $X=B_j$.
We now show that   \eqref{eq:LLX} holds for $X=B_k$ with $k \in \lbrace 1,2,3\rbrace\backslash \lbrace i,j \rbrace$.
Let us compute $L_iL_j(B_k)$. By Theorem \ref{thm:main1},
\begin{align} \label{eq:LB}
 L_j(B_k) = B_k+ \frac{q A_j^2 B_k - (q+q^{-1}) A_jB_k A_j + q^{-1} B_k A_j^2}{(q-q^{-1})(q^2-q^{-2})}.
\end{align}
We  apply $L_i$ to each side of \eqref{eq:LB}, and evaluate the result using $L_i(A_j)=A_j$; this yields
\begin{align} \label{eq:LB2}
L_i L_j(B_k) = L_i(B_k)+ \frac{q A_j^2 L_i(B_k) - (q+q^{-1}) A_jL_i(B_k) A_j + q^{-1} L_i(B_k) A_j^2}{(q-q^{-1})(q^2-q^{-2})}.
\end{align}
We now  compute $L_jL_i(B_k)$. By Definition  \ref{def:bbO}(iii),
\begin{align}\label{eq:AAB}
 A_j^3 B_k - \lbrack 3 \rbrack_q  A_j^2 B_k A_j + \lbrack 3 \rbrack_q A_j B_k A_j^2 - B_k A_j^3 =
 (q^2-q^{-2})^2 (B_k A_j- A_j B_k).
\end{align}
We apply $L_i$ to each side of \eqref{eq:AAB}, and evaluate the result using $L_i(A_j)=A_j$; this yields
\begin{align*}
 A_j^3 L_i(B_k) &- \lbrack 3 \rbrack_q  A_j^2 L_i(B_k) A_j + \lbrack 3 \rbrack_q A_j L_i(B_k) A_j^2 - L_i(B_k) A_j^3 \\
& =
 (q^2-q^{-2})^2 \bigl(L_i(B_k) A_j- A_j L_i(B_k) \bigr).
\end{align*}
By this and Lemma \ref{lem:FeatureL}(ii) with $X=L_i(B_k)$,
\begin{align} \label{eq:LB3}
L_j L_i(B_k) = L_i(B_k)+ \frac{q A_j^2 L_i(B_k) - (q+q^{-1}) A_jL_i(B_k) A_j + q^{-1} L_i(B_k) A_j^2}{(q-q^{-1})(q^2-q^{-2})}.
\end{align}
Comparing \eqref{eq:LB2}, \eqref{eq:LB3} we see that \eqref{eq:LLX} holds for $X=B_k$.
We have shown that \eqref{eq:LLX} holds for every standard generator $X$ of $\mathbb O_q$.
Therefore $L_iL_j=L_jL_i$.
One similarly shows that  $L^*_iL^*_j=L^*_jL^*_i$. \\
\noindent (ii) It suffices to show that 
\begin{align} \label{eq:LLSX}
L_i L^*_i (X)=L^*_i L_i(X)
\end{align}
for every standard generator $X$ of $\mathbb O_q$.
First assume that $X$ is one of $A_i, B_i$. Then \eqref{eq:LLSX} holds because $L_i(X)=X$ and $L^*_i(X)=X$.
Next assume that  $X=A_j$ with  $j \in \lbrace 1,2,3\rbrace\backslash \lbrace i \rbrace$.
Let us compute $L_i L^*_i (A_j)$. By Theorem \ref{thm:main1},
\begin{align} \label{eq:LLA}
 L^*_i(A_j) = A_j+ \frac{q B_i^2 A_j - (q+q^{-1}) B_i A_j B_i + q^{-1} A_j B_i^2}{(q-q^{-1})(q^2-q^{-2})}.
\end{align}
We apply $L_i$ to each side of \eqref{eq:LLA}, and evaluate the result using $L_i(A_j)=A_j$ and $L_i(B_i)=B_i$; this yields
$L_i L^*_i(A_j)  = L^*_i(A_j)$. We just mentioned that $L_i(A_j)=A_j$, so 
 $L^*_i L_i(A_j) = L^*_i(A_j)$. By these comments,  \eqref{eq:LLSX} holds for $X=A_j$.
 One similarly shows that \eqref{eq:LLSX} holds for $X=B_j$.
 We have shown that \eqref{eq:LLSX} holds for every standard generator $X$ of $\mathbb O_q$. Therefore $L_iL^*_i = L^*_i L_i$.
 \\
\noindent (iii) By the discussion above Note \ref{note:global},  along with Lemma \ref{lem:Injera}.
\end{proof}

\noindent  The Lusztig automorphisms of $\mathbb O_q$  are described by the diagram below:
\vspace{.5cm}

\begin{center}
\begin{picture}(0,60)
\put(-100,0){\line(1,1.73){30}}
\put(-100,0){\line(1,-1.73){30}}
\put(-70,52){\line(1,0){60}}
\put(-70,-52){\line(1,0){60}}
\put(-10,-52){\line(1,1.73){30}}
\put(-10,52){\line(1,-1.73){30}}

\put(-102,-3){$\bullet$}
\put(-72.5,49){$\bullet$}
\put(-72.5,-55){$\bullet$}

\put(16.5,-3){$\bullet$}
\put(-13.5,49){$\bullet$}
\put(-13.5,-55){$\bullet$}

\put(-120,-3){$L_1$}
\put(-11,60){$L_2$}
\put(-11,-66){$L_3$}

\put(26,-3){$L^*_1$}
\put(-82,-66){$L^*_2$}
\put(-82,60){$L^*_3$}

\end{picture}
\end{center}
\vspace{2.5cm}

\hspace{2.2cm}
 Fig. 2.  Nonadjacent Lusztig automorphisms commute. \\
${} \qquad \quad \quad \qquad \;$ Adjacent Lusztig automorphisms and their inverses \\
${} \qquad \quad \quad \qquad \;$ freely generate a subgroup of ${\rm Aut}(\mathbb O_q)$.
\vspace{.5cm}

\section{Finite-dimensional irreducible $\mathbb O_q$-modules}

\noindent   In this section, we describe what happens
when a finite-dimensional irreducible $\mathbb O_q$-module is twisted by a Lusztig automorphism of $\mathbb O_q$. To simplify the notation, 
we will restrict our attention to  the Lusztig automorphism $L_1$. Similar results apply to the other Lusztig automorphisms.
\medskip

\noindent
Throughout this section, $V$ denotes a finite-dimensional irreducible $\mathbb O_q$-module on which $A_1$ is diagonalizable.
We will display an invertible $\Psi \in {\rm End}(V)$ such that on $V$,  $L_1(X)=\Psi^{-1} X \Psi$ for all $X \in \mathbb O_q$.
\medskip

\noindent The following definition is for notational convenience.
\begin{definition}\label{def:NSup} \rm The action of $\mathbb O_q$ on $V$ induces an algebra homomorphism $\mathbb O_q \to {\rm End}(V)$;
for $i \in \lbrace 1,2,3\rbrace$ let $A^{(i)}$ (resp. $B^{(i)}$) denote the image of $A_i $ (resp. $B_i$) under this homomorphism.  Thus on $V$,
$A_i = A^{(i)}$ and $B_i =B^{(i)}$.
\end{definition}

\noindent  We first dispense with a special case. For the moment, assume that $A^{(1)}$ is a scalar multiple of the identity map  $I$.
By the description of $L_1$ in Theorem
\ref{thm:main1}, on $V$ we have $L_1(A_i)=A_i$ and $L_1(B_i)=B_i$ for all $i \in \lbrace 1,2,3\rbrace$.
 Therefore, on $V$ we have
 $L_1(X)=X$
for all $X \in \mathbb O_q$. Taking $\Psi =I$, on $V$ we have $L_1(X)=\Psi^{-1} X \Psi$ for all $X \in \mathbb O_q$.
\medskip

\noindent For the rest of this section, we assume that $A^{(1)}$ is not a scalar multiple the identity map.

\begin{definition}\label{def:EP} \rm
Let $\lbrace E_i^{(1)} \rbrace_{i=0}^d$ denote an ordering of the primitive idempotents of $A^{(1)}$.
For $0 \leq i \leq d$ let $\theta_i=\theta_i^{(1)}$ denote the eigenvalue of $A^{(1)}$ associated with $E_i^{(1)}$. 
\end{definition}

\noindent We have some comments about Definition \ref{def:EP}. Note that $d\geq 1$,
since $A^{(1)}$ is not a scalar multiple of $I$.
As we discussed in Section 2, the eigenvalues $\lbrace \theta_i \rbrace_{i=0}^d$ are contained in $\mathbb F$ and mutually distinct.
Also,
\begin{align*}
&I = \sum_{i=0}^d E^{(1)}_i, \qquad \qquad E^{(1)}_i E^{(1)}_j = \delta_{i,j} E^{(1)}_i \qquad (0 \leq i,j\leq d), \\
&A^{(1)} = \sum_{i=0}^d \theta_i E^{(1)}_i, \qquad \qquad E^{(1)}_i A^{(1)} = \theta_i E^{(1)}_i =A^{(1)} E^{(1)}_i \qquad (0 \leq i \leq d), \\
&    E_i^{(1)} =\prod_{\stackrel{0 \leq j \leq d}{j \neq i}}
       \frac{A^{(1)}-\theta_jI}{\theta_i-\theta_j} \qquad \qquad (0 \leq i \leq d).
       \end{align*}
Moreover,
  \begin{align} \label{eq:Vsum}
  V = \sum_{i=0}^d E^{(1)}_i V \qquad \qquad \hbox{\rm (direct sum).}
  \end{align}
  For $0 \leq i \leq d$ the subspace $E^{(1)}_iV$ is the eigenspace of $A^{(1)}$ for the eigenvalue $\theta_i$.

\begin{definition} \label{def:Poly} \rm  We define a polynomial $P \in \mathbb F\lbrack \lambda_1, \lambda_2 \rbrack$ by
\begin{align*}
P(\lambda_1, \lambda_2) = \lambda_1^2 - (q^2+q^{-2}) \lambda_1 \lambda_2 + \lambda_2^2 + (q^2-q^{-2})^2.
\end{align*}
\end{definition}
\noindent Note that $P(\lambda_1, \lambda_2)= P(\lambda_2, \lambda_1)$. 
\begin{lemma} \label{lem:EBE} For $0 \leq i,j\leq d$ we have
\begin{align*}
0 = E_i^{(1)} B^{(2)} E_j^{(1)} (\theta_i - \theta_j) P(\theta_i, \theta_j), \qquad \quad  0 = E_i^{(1)} B^{(3)} E_j^{(1)}(\theta_i - \theta_j)P(\theta_i, \theta_j).
\end{align*}
\end{lemma}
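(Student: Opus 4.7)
The plan is to exploit the $q$-Dolan/Grady relation satisfied by the adjacent pair $A_1, B_2$ (and likewise $A_1, B_3$), then sandwich that single relation between the spectral projectors $E_i^{(1)}$ on the left and $E_j^{(1)}$ on the right. Because each $E_i^{(1)}$ is a polynomial in $A^{(1)}$, it commutes with $A^{(1)}$ and acts as multiplication by $\theta_i$ (resp.\ $\theta_j$), so every monomial $A_1^k B_2 A_1^\ell$ sandwiched between $E_i^{(1)}$ and $E_j^{(1)}$ collapses to the scalar $\theta_i^k \theta_j^\ell$ times $E_i^{(1)} B^{(2)} E_j^{(1)}$.

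First I would take the first relation in Definition~\ref{def:bbO}(iii) with $i=1$, $j=2$ and compress it by $E_i^{(1)}(\cdot)E_j^{(1)}$. The left-hand side becomes
\[
\bigl(\theta_i^{3} - [3]_q \theta_i^{2}\theta_j + [3]_q \theta_i \theta_j^{2} - \theta_j^{3}\bigr)\, E_i^{(1)} B^{(2)} E_j^{(1)},
\]
which factors as $(\theta_i-\theta_j)\bigl(\theta_i^2 + \theta_i\theta_j + \theta_j^2 - [3]_q \theta_i\theta_j\bigr) E_i^{(1)} B^{(2)} E_j^{(1)}$. Using $[3]_q = q^2+q^{-2}+1$, the inner factor simplifies to $\theta_i^2-(q^2+q^{-2})\theta_i\theta_j+\theta_j^2$. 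The right-hand side similarly collapses to $(q^2-q^{-2})^2(\theta_j-\theta_i)\, E_i^{(1)} B^{(2)} E_j^{(1)}$.

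Moving everything to one side, the two contributions combine into
\[
(\theta_i-\theta_j)\bigl[\theta_i^{2} - (q^{2}+q^{-2})\theta_i\theta_j + \theta_j^{2} + (q^{2}-q^{-2})^{2}\bigr] E_i^{(1)} B^{(2)} E_j^{(1)} = 0,
\]
and the bracketed expression is precisely $P(\theta_i,\theta_j)$ from Definition~\ref{def:Poly}. This proves the first identity. For the second identity, $A_1$ and $B_3$ are also adjacent in the hexagon, so the same $q$-Dolan/Grady relation holds with $B_2$ replaced by $B_3$; repeating the compression verbatim yields $(\theta_i-\theta_j)P(\theta_i,\theta_j)\, E_i^{(1)} B^{(3)} E_j^{(1)}=0$. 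There is no real obstacle: the argument is a direct scalar computation, and the only thing to verify carefully is the algebraic identity $\theta_i^2+\theta_i\theta_j+\theta_j^2-[3]_q\theta_i\theta_j = \theta_i^2-(q^2+q^{-2})\theta_i\theta_j+\theta_j^2$, after which $P$ appears on the nose.
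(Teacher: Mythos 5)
Your proposal is correct and follows essentially the same route as the paper: sandwich the $q$-Dolan/Grady relation for the adjacent pair $A_1,B_2$ (resp.\ $A_1,B_3$) between $E_i^{(1)}$ and $E_j^{(1)}$, use $E_i^{(1)}A^{(1)}=\theta_iE_i^{(1)}$ and $A^{(1)}E_j^{(1)}=\theta_jE_j^{(1)}$ to reduce to scalars, and factor out $(\theta_i-\theta_j)P(\theta_i,\theta_j)$. The scalar bookkeeping, including the cubic factorization and the identification with $P$, matches the paper's computation exactly.
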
 
\begin{proof} We first verify the equation on the left. By Definitions \ref{def:bbO}(iii) and \ref{def:NSup},
\begin{align*}
0 &= \bigl( A^{(1)}\bigr)^3 B^{(2)}- \lbrack 3 \rbrack_q  \bigl( A^{(1)}\bigr)^2B^{(2)} A^{(1)} + 
\lbrack 3 \rbrack_q A^{(1)} B^{(2)} \bigl( A^{(1)}\bigr)^2 - B^{(2)} \bigl( A^{(1)}\bigr)^3 \\
 &\qquad \qquad \qquad + (q^2-q^{-2})^2 \bigl(A^{(1)}B^{(2)} -B^{(2)} A^{(1)}\bigr).
\end{align*}
In this equation, we multiply each term on the left by $E_i^{(1)}$ and the right by $E_j^{(1)}$. We simplify the result using
$ E^{(1)}_i A^{(1)} = \theta_i E^{(1)}_i $ and $ A^{(1)} E^{(1)}_j = \theta_j E^{(1)}_j$. This yields
\begin{align*}
0 &= E^{(1)}_i B^{(2)} E^{(1)}_j \Bigl(\theta_i^3 - \lbrack 3 \rbrack_q \theta_i^2 \theta_j +
\lbrack 3 \rbrack_q \theta_i \theta_j^2 - \theta_j^3 + (q^2-q^{-2})^2 (\theta_i - \theta_j) \Bigr) \\
&= E^{(1)}_i B^{(2)} E^{(1)}_j (\theta_i - \theta_j) \Bigl(
\theta_i^2 - (q^2+q^{-2}) \theta_i \theta_j + \theta_j^2 + (q^2-q^{-2})^2
 \Bigr) \\
 &=  E^{(1)}_i B^{(2)} E^{(1)}_j (\theta_i - \theta_j) P(\theta_i, \theta_j).
\end{align*}
\noindent We have verified the equation on the left. The equation on the right is similarly verified.
\end{proof}

\begin{definition} \label{def:diagram} \rm Using $A^{(1)}$ we  define an undirected graph $\mathcal D = \mathcal D^{(1)}$ as follows.
The vertex set of $\mathcal D$ is the set $\lbrace \theta_i \rbrace_{i=0}^d$
of eigenvalues for $A^{(1)}$.  For $0 \leq i,j\leq d$ the vertices $\theta_i, \theta_j$ are adjacent in $\mathcal D$ whenever
$i \not=j$ and $P(\theta_i, \theta_j)=0$.
\end{definition}
\noindent We have some comments about the graph $\mathcal D$ from Definition \ref{def:diagram}.
The polynomial $P(\lambda_1, \lambda_2)$ is quadratic in each variable, so each vertex in $\mathcal D$ is adjacent to at most two vertices in $\mathcal D$. Therefore,
each connected component of $\mathcal D$ is a path or a cycle. We are going to show that $\mathcal D$ is connected, and that $\mathcal D$
is a path.

\begin{lemma} \label{lem:Pmeaning} For distinct nonadjacent vertices $\theta_i, \theta_j$ of $\mathcal D$ we have
\begin{align} \label{eq:EBBE}
E_i^{(1)} B^{(2)} E_j^{(1)} =0, \qquad \qquad E_i^{(1)} B^{(3)} E_j^{(1)} =0.
\end{align}
\end{lemma}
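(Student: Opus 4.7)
The proof is essentially immediate from Lemma \ref{lem:EBE} combined with the definition of the graph $\mathcal D$. My plan is as follows.

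Starting from the two identities in Lemma \ref{lem:EBE}, namely
\begin{align*}
0 = E_i^{(1)} B^{(2)} E_j^{(1)} (\theta_i - \theta_j) P(\theta_i, \theta_j), \qquad 0 = E_i^{(1)} B^{(3)} E_j^{(1)} (\theta_i - \theta_j) P(\theta_i, \theta_j),
\end{align*}
I would observe that the scalar factor $(\theta_i - \theta_j) P(\theta_i, \theta_j)$ is nonzero under our hypothesis. Indeed, since $\theta_i, \theta_j$ are distinct vertices of $\mathcal D$, we have $\theta_i \neq \theta_j$ (the eigenvalues of $A^{(1)}$ are mutually distinct, as noted after Definition \ref{def:EP}), so $\theta_i - \theta_j \neq 0$. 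Moreover, by Definition \ref{def:diagram}, the vertices $\theta_i, \theta_j$ are adjacent in $\mathcal D$ precisely when $i \neq j$ and $P(\theta_i, \theta_j) = 0$; since they are distinct and \emph{non}adjacent, we must have $P(\theta_i, \theta_j) \neq 0$.

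Consequently, dividing each equation of Lemma \ref{lem:EBE} by the nonzero scalar $(\theta_i - \theta_j) P(\theta_i, \theta_j)$ yields the two equations \eqref{eq:EBBE}. There is no real obstacle here; the work has already been done in Lemma \ref{lem:EBE} and packaged into the graph $\mathcal D$ in Definition \ref{def:diagram}, so this lemma is the clean restatement that makes the combinatorial structure visible. The only point worth flagging is the bookkeeping detail that the definition of adjacency bundles together the two conditions $i \neq j$ and $P(\theta_i, \theta_j) = 0$, so nonadjacent distinct vertices automatically fall under the case where $P$ does not vanish.
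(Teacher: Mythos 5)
Your proposal is correct and follows exactly the paper's own route: the paper's proof simply says the lemma is immediate from Lemma \ref{lem:EBE} and Definition \ref{def:diagram}, and you have supplied precisely the (routine) justification that the scalar factor $(\theta_i-\theta_j)P(\theta_i,\theta_j)$ is nonzero for distinct nonadjacent vertices.
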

\begin{proof} Immediate from Lemma  \ref{lem:EBE} and Definition \ref{def:diagram}.
\end{proof}

\begin{lemma} The graph $\mathcal D$ is connected.
\end{lemma}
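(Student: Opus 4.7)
The plan is to prove connectedness by contradiction, exploiting the irreducibility of $V$ as an $\mathbb O_q$-module. Suppose $\mathcal D$ is disconnected. Then the index set $\lbrace 0,1,\ldots,d\rbrace$ partitions as a disjoint union $S \sqcup T$ with $S, T$ both nonempty and no edge of $\mathcal D$ joining $S$ to $T$. Define the subspace
\begin{equation*}
W = \sum_{i \in S} E^{(1)}_i V.
\end{equation*}
By \eqref{eq:Vsum}, $W$ is a proper nonzero subspace of $V$. The strategy is to show that $W$ is an $\mathbb O_q$-submodule, which contradicts the irreducibility of $V$.

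To check $\mathbb O_q$-invariance, I would verify it on each of the six standard generators. First, $A_1$ acts on $W$ via $A^{(1)}$, which preserves each $E^{(1)}_i V$ and hence $W$. Second, by Definition \ref{def:bbO}(i),(ii), each of $A_2, A_3, B_1$ commutes with $A_1$, so the operators $A^{(2)}, A^{(3)}, B^{(1)}$ commute with every primitive idempotent $E^{(1)}_i$; therefore they preserve each eigenspace $E^{(1)}_i V$ and in particular preserve $W$.

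The essential step is to show that $B^{(2)}$ and $B^{(3)}$ preserve $W$. Fix $j \in S$ and $v \in E^{(1)}_j V$. Using $I = \sum_{i=0}^d E^{(1)}_i$,
\begin{equation*}
B^{(2)} v = \sum_{i=0}^d E^{(1)}_i B^{(2)} E^{(1)}_j v.
\end{equation*}
For each $i \in T$, we have $i \neq j$ and $\theta_i, \theta_j$ are nonadjacent in $\mathcal D$ by choice of the partition, so Lemma~\ref{lem:Pmeaning} gives $E^{(1)}_i B^{(2)} E^{(1)}_j = 0$. Hence $B^{(2)} v \in \sum_{i \in S} E^{(1)}_i V = W$, so $B^{(2)}$ preserves $W$. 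The same argument with the second equation of \eqref{eq:EBBE} shows $B^{(3)}$ preserves $W$.

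Thus $W$ is invariant under all standard generators, so $W$ is an $\mathbb O_q$-submodule. Since $W$ is neither $0$ nor $V$, this contradicts the irreducibility of $V$. Therefore $\mathcal D$ is connected. The only mild obstacle is making sure one has accounted for all six generators; the off-diagonal vanishing of $B^{(2)}$ and $B^{(3)}$ between non-adjacent eigenspaces (Lemma~\ref{lem:Pmeaning}) is precisely what was designed to drive this argument.
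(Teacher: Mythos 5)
Your proof is correct and follows essentially the same route as the paper: both arguments isolate a union of eigenspaces of $A^{(1)}$ indexed by a union of connected components, show it is preserved by $A^{(1)}, A^{(2)}, A^{(3)}, B^{(1)}$ via commutation with $A^{(1)}$ and by $B^{(2)}, B^{(3)}$ via Lemma~\ref{lem:Pmeaning}, and then invoke irreducibility. The only difference is cosmetic --- the paper argues directly that a connected component's span must equal $V$, while you phrase it as a contradiction from a proper nonzero submodule.
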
 
\begin{proof} Let $\Omega$ denote a nonempty subset of $\lbrace \theta_i \rbrace_{i=0}^d$ that forms a connected component of $\mathcal D$. We show that
$\Omega = \lbrace \theta_i \rbrace_{i=0}^d$. 
Define the subspace
\begin{align}
W = \sum_{\theta_i \in \Omega}  E^{(1)}_iV. \label{eq:W}
\end{align}
We have $W \not=0$ since $\Omega$ is nonempty.
We will show that $W=V$. To this end,
we will show that $W$ is an $\mathbb O_q$-submodule of $V$. We first show that $B^{(2)} W \subseteq W$.
To do this, we show that $B^{(2)}  E^{(1)}_j V \subseteq W$ for all  $\theta_j \in \Omega$. Let $\theta_j$ be given.
For
a vertex $\theta_i \not\in \Omega $ we see that $\theta_i, \theta_j$ are not adjacent, so $E_i^{(1)} B^{(2)} E_j^{(1)} =0$ by Lemma \ref{lem:Pmeaning}.
We have
\begin{align*}
B^{(2)}  E^{(1)}_j V &= I B^{(2)}  E^{(1)}_j V 
                               = \sum_{i=0}^d E^{(1)}_i B^{(2)}  E^{(1)}_j V \\
                                &=\sum_{\theta_i \in \Omega}  E^{(1)}_i B^{(2)}  E^{(1)}_j V 
                                  \subseteq \sum_{\theta_i \in \Omega}  E^{(1)}_i V =W.
\end{align*}
By this and \eqref{eq:W}, we obtain  $B^{(2)} W \subseteq W$.  The inclusion $B^{(3)} W \subseteq W$ is similarly obtained.
Let $X$ denote any one of $A^{(1)}$, $A^{(2)}$, $A^{(3)}$, $B^{(1)}$.
The map $X$  commutes with $A^{(1)}$, so each eigenspace of $A^{(1)}$ is invariant under $X$.
It follows via \eqref{eq:W} that $W$ is invariant under $X$.
We have shown that $W$ is invariant under $A^{(i)}$ and $B^{(i)}$ for all $i \in \lbrace 1,2,3\rbrace$.
Therefore, $W$
is an $\mathbb O_q$-submodule of $V$. Consequently, $W=V$ since the $\mathbb O_q$-module $V$ is irreducible.
Comparing \eqref{eq:Vsum}, \eqref{eq:W} we obtain $\Omega= \lbrace \theta_i \rbrace_{i=0}^d$, so $\mathcal D$ is connected.
\end{proof}
\noindent By our results so far, the graph $\mathcal D$ is a path or a cycle. Shortly we will show that $\mathcal D$ is a path.

\begin{lemma} \label{lem:ijk} Referring to the graph $\mathcal D$, suppose that a vertex $\theta_i$ is adjacent to two distinct vertices $\theta_j, \theta_k$. Then
\begin{align*}
\theta_j+\theta_k = (q^2+q^{-2}) \theta_i.
\end{align*}
\end{lemma}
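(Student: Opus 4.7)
The plan is to view the adjacency condition through the polynomial $P(\lambda_1, \lambda_2)$ from Definition \ref{def:Poly}, treating one variable as fixed and using Vieta's formulas on the resulting quadratic.

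Concretely, I would fix the vertex $\theta_i$ and consider $P(\theta_i, \lambda)$ as a polynomial in the single variable $\lambda$. From the formula
\begin{align*}
P(\theta_i, \lambda) = \lambda^2 - (q^2+q^{-2})\theta_i \lambda + \theta_i^2 + (q^2-q^{-2})^2,
\end{align*}
this is a monic quadratic in $\lambda$, hence has at most two roots in $\mathbb F$. By Definition \ref{def:diagram}, the hypothesis that $\theta_j$ and $\theta_k$ are each adjacent to $\theta_i$ translates to $P(\theta_i, \theta_j) = 0$ and $P(\theta_i, \theta_k) = 0$. Thus $\theta_j$ and $\theta_k$ are both roots of $P(\theta_i, \lambda)$. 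Since they are assumed distinct, they are the two roots of this quadratic.

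By Vieta's formulas applied to the monic quadratic $P(\theta_i, \lambda)$, the sum of its two roots equals the negative of the coefficient of $\lambda$, namely $(q^2+q^{-2})\theta_i$. Therefore
\begin{align*}
\theta_j + \theta_k = (q^2+q^{-2})\theta_i,
\end{align*}
as required. There is no substantial obstacle here; the only point worth noting is that no adjacency of $\theta_j$ to $\theta_k$ (or lack thereof) is used, only that both are adjacent to the common vertex $\theta_i$, and the argument works entirely at the level of the polynomial $P$ without needing to return to the module $V$ or the operators $B^{(2)}, B^{(3)}$.
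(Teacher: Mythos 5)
Your proof is correct and is essentially the same argument as the paper's: the paper computes the difference quotient $\bigl(P(\theta_i,\theta_j)-P(\theta_i,\theta_k)\bigr)/(\theta_j-\theta_k)=\theta_j+\theta_k-(q^2+q^{-2})\theta_i$, which is just Vieta's formula for the sum of the two distinct roots of the monic quadratic $P(\theta_i,\lambda)$ in disguise. Your closing remark is also accurate --- the lemma is purely about the polynomial $P$ and uses nothing about $V$ or the operators.
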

\begin{proof} By Definition \ref{def:diagram}, $P(\theta_i, \theta_j)=0$ and $P(\theta_i, \theta_k)=0$. By this and Definition \ref{def:Poly},
\begin{align*}
0 &= \frac{P(\theta_i, \theta_j)-P(\theta_i,\theta_k)}{\theta_j-\theta_k} 
   = \theta_j + \theta_k-(q^2+q^{-2}) \theta_i.
\end{align*}
The result follows.
\end{proof}

\begin{lemma} \label{lem:Path} The graph $\mathcal D$ is a path. Label the vertices
such that $\theta_{i-1}, \theta_i$ are adjacent in $\mathcal D$ for $1 \leq i \leq d$. Then there exists a nonzero scalar $a=a^{(1)}$ in  $\mathbb F$ 
such that
\begin{align} \label{eq:eigenval}
\theta_i = a q^{d-2i} + a^{-1} q^{2i-d} \qquad \qquad (0 \leq i \leq d).
\end{align}
\end{lemma}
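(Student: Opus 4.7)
The plan is as follows. Since the graph $\mathcal{D}$ is connected (by the previous lemma) and every vertex has degree at most two (because the polynomial $P(\lambda_1,\lambda_2)$ from Definition~\ref{def:Poly} is quadratic in $\lambda_2$ for fixed $\lambda_1$), the graph $\mathcal{D}$ is either a path or a cycle. In either case I can label the eigenvalues as $\theta_0,\theta_1,\ldots,\theta_d$ so that $\theta_{i-1}$ and $\theta_i$ are adjacent in $\mathcal{D}$ for $1\leq i\leq d$. The goal is to establish the closed form \eqref{eq:eigenval} and then to use it to rule out the cycle case.

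To this end I apply Lemma~\ref{lem:ijk} at each interior vertex $\theta_i$ with $1\leq i\leq d-1$: this vertex has two distinct neighbours $\theta_{i-1}$ and $\theta_{i+1}$, so Lemma~\ref{lem:ijk} yields the three-term recurrence
\[
  \theta_{i+1} = (q^2+q^{-2})\,\theta_i - \theta_{i-1} \qquad (1\leq i\leq d-1).
\]
The characteristic polynomial $x^2-(q^2+q^{-2})x+1$ factors as $(x-q^2)(x-q^{-2})$, and the two roots are distinct because $q$ is not a root of unity. Hence there exist unique $c,c'\in\mathbb{F}$, computable directly from $\theta_0$ and $\theta_1$, such that $\theta_i=c q^{2i}+c' q^{-2i}$ for $0\leq i\leq d$.

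Next I feed this closed form into the adjacency condition $P(\theta_0,\theta_1)=0$. A short bookkeeping computation collapses most terms and gives $P(\theta_0,\theta_1)=(q^2-q^{-2})^2(1-cc')$, so adjacency forces $cc'=1$. In particular $c,c'$ are nonzero, so I may define $a\in\mathbb{F}\setminus\{0\}$ by $c=a^{-1}q^{-d}$, whence $c'=aq^d$, and substitution yields the required formula $\theta_i=aq^{d-2i}+a^{-1}q^{2i-d}$.

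The main obstacle is to exclude the cycle case, which I expect to be the only subtle step. If $\mathcal{D}$ were a cycle then $\theta_0$ and $\theta_d$ would also be adjacent, and Lemma~\ref{lem:ijk} would apply at both $\theta_0$ (with neighbours $\theta_1,\theta_d$) and $\theta_d$ (with neighbours $\theta_{d-1},\theta_0$), producing two extra linear relations. Substituting the closed form $\theta_i=cq^{2i}+c'q^{-2i}$ into these relations and simplifying, one factor $(q^{2(d+1)}-1)$ will emerge on each side; since $q$ is not a root of unity and $d\geq 1$ this factor is nonzero, and combined with $q^2\neq q^{-2}$ this will force $c=c'=0$, contradicting the fact that the $\theta_i$ are mutually distinct. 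Therefore $\mathcal{D}$ is a path, completing the proof.
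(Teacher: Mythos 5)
Your proposal is correct and follows essentially the same route as the paper: both solve the three-term recurrence coming from Lemma \ref{lem:ijk}, normalize the two coefficients of the general solution using $P(\theta_0,\theta_1)=0$, and then derive a contradiction from the extra adjacency present in the cycle case. The only (harmless) difference is in that last step: the paper evaluates $P(\theta_d,\theta_0)$ directly against the closed form to obtain $q^{2d+2}=1$, whereas you apply Lemma \ref{lem:ijk} at the endpoints $\theta_0$ and $\theta_d$ to get two linear relations; I checked that your version goes through as predicted (the factor $q^{2d+2}-1$ cancels, leaving $c=c'q^{2-2d}$ and $c=c'q^{-2d-2}$, whence $c=c'=0$).
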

\begin{proof} The graph $\mathcal D$ is a path or a cycle. In either case, we may label the vertices 
such that $\theta_{i-1}, \theta_i$ are adjacent in $\mathcal D$ for $1 \leq i \leq d$.
If $\mathcal D$ is a cycle, then $d\geq 2$ and $\theta_d, \theta_0$ are adjacent in $\mathcal D$.
By Lemma \ref{lem:ijk},
\begin{align}
 \theta_{i-1} - (q^2+q^{-2}) \theta_i + \theta_{i+1} = 0 \qquad \qquad (1 \leq i \leq d-1). \label{eq:rec}
 \end{align}
 Solving the recurrence \eqref{eq:rec}, we obtain $a, \alpha \in \mathbb F$ such that 
 \begin{align} \label{eq:eig2}
 \theta_i = a  q^{d-2i} + \alpha q^{2i-d} \qquad \qquad (0 \leq i \leq d).
 \end{align}
 Using Definitions \ref{def:Poly}, \ref{def:diagram} and  \eqref{eq:eig2},
 \begin{align*}
 0 = P(\theta_{0},\theta_1) =(q^2-q^{-2})^2  (1-a  \alpha ).
 \end{align*}
 The scalar $q^2 - q^{-2}$ is nonzero because $q$ is not a root of unity.
 Therefore $a \alpha = 1$. In particular, $a \not=0$. Evaluating \eqref{eq:eig2} using $\alpha  = a^{-1}$, we obtain \eqref{eq:eigenval}.
 It remains to show that $\mathcal D$ is not
 a cycle. To this end, we assume that $\mathcal D$ is a cycle, and get a contradiction. By assumption, $d\geq 2$ and $\theta_d, \theta_0$
 are adjacent. Using Definitions \ref{def:Poly}, \ref{def:diagram} and \eqref{eq:eigenval},
 \begin{align*}
 0 = \frac{P(\theta_d, \theta_0)}{(\theta_0-\theta_{d-1})(\theta_1-\theta_d)}  = \frac{q^{d+1}-q^{-d-1}}{q^{d-1}-q^{1-d}}.
 \end{align*}
 Therefore $q^{2d+2}=1$, contradicting the assumption that $q$ is not a root of unity.  We have shown that
  $\mathcal D$ is not a cycle, so $\mathcal D$ is a path. The result follows.
\end{proof}

\noindent For the rest of this section, we label the vertices of $\mathcal D$ as in Lemma \ref{lem:Path}.

\begin{lemma} \label{lem:ineq} The scalar $a^2$ is not among $q^{2d-2}, q^{2d-4},\ldots, q^{2-2d}$.
\end{lemma}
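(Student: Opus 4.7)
The plan is to deduce the claim directly from the mutual distinctness of the eigenvalues $\theta_0, \theta_1, \ldots, \theta_d$ of $A^{(1)}$, combined with the explicit parametrization $\theta_i = aq^{d-2i} + a^{-1} q^{2i-d}$ supplied by Lemma \ref{lem:Path}.

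First I would fix $i, j$ with $0 \leq i < j \leq d$ and expand $\theta_i - \theta_j$. A routine factorization puts this difference in the form
\begin{align*}
\theta_i - \theta_j = \frac{q^{2i} - q^{2j}}{a\, q^d}\bigl(1 - a^2\, q^{2d-2i-2j}\bigr).
\end{align*}
The prefactor $q^{2i} - q^{2j}$ is nonzero because $q$ is not a root of unity and $i \neq j$. Hence $\theta_i = \theta_j$ is equivalent to $a^2 = q^{2(i+j)-2d}$.

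Next I would observe that as the unordered pair $\{i,j\}$ of distinct elements of $\{0, 1, \ldots, d\}$ varies, the sum $i+j$ attains every integer from $1$ to $2d-1$: for any such $s$, take $\{0, s\}$ when $s \leq d$ and $\{s-d, d\}$ when $s > d$. Consequently $2(i+j)-2d$ runs over every even integer in the interval $[2-2d,\, 2d-2]$, so $q^{2(i+j)-2d}$ ranges over precisely the list $q^{2d-2}, q^{2d-4}, \ldots, q^{2-2d}$.

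Since the primitive idempotents $\{E_i^{(1)}\}_{i=0}^d$ are distinct, so are their associated eigenvalues $\theta_i$ (as recorded after Definition \ref{def:EP}). The equivalence above therefore forces $a^2$ to avoid every element of that list, which is exactly the assertion of the lemma. The argument is a short computation plus a range-counting remark, and I do not anticipate any real obstacle.
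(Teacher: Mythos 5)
Your proof is correct and is exactly the argument the paper intends: the paper's own proof is the one-line remark ``By \eqref{eq:eigenval} and since $\lbrace \theta_i \rbrace_{i=0}^d$ are mutually distinct,'' and you have simply written out the factorization of $\theta_i-\theta_j$ and the range of $2(i+j)-2d$ that this remark leaves implicit. No issues.
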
 
\begin{proof} By  \eqref{eq:eigenval} and since $\lbrace \theta_i \rbrace_{i=0}^d$ are mutually distinct.
\end{proof}

\begin{lemma} \label{lem:Pmeaning2} For $0 \leq i,j\leq d$ such that $\vert i-j\vert >1$,
\begin{align*} 
E_i^{(1)} B^{(2)} E_j^{(1)} =0, \qquad \qquad E_i^{(1)} B^{(3)} E_j^{(1)} =0.
\end{align*}
\end{lemma}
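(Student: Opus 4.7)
The plan is to derive the lemma as an immediate consequence of Lemma \ref{lem:Pmeaning} combined with the structural description of $\mathcal{D}$ from Lemma \ref{lem:Path}. The key observation is that in the path labeling of Lemma \ref{lem:Path}, adjacency in $\mathcal{D}$ coincides exactly with $|i-j|=1$, so the hypothesis $|i-j|>1$ places the pair $\theta_i,\theta_j$ squarely in the scope of Lemma \ref{lem:Pmeaning}.

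First I would note that the polynomial $P(\lambda_1,\lambda_2)$ in Definition \ref{def:Poly} is quadratic in each variable, so by Definition \ref{def:diagram} each vertex of $\mathcal{D}$ has at most two neighbors. By Lemma \ref{lem:Path}, $\mathcal{D}$ is a path and the vertices have been labeled so that $\theta_{r-1}$ is adjacent to $\theta_r$ in $\mathcal{D}$ for $1\le r\le d$. For an internal index $r$ with $1\le r\le d-1$, the two neighbors $\theta_{r-1}$ and $\theta_{r+1}$ are distinct, so the degree bound forces these to be the only neighbors of $\theta_r$ in $\mathcal{D}$; meanwhile the endpoints $\theta_0$ and $\theta_d$ each have a unique neighbor, namely $\theta_1$ and $\theta_{d-1}$, respectively. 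Consequently $\theta_i$ and $\theta_j$ are adjacent in $\mathcal{D}$ if and only if $|i-j|=1$.

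Next, for any $0\le i,j\le d$ with $|i-j|>1$, I would observe that $i\neq j$, that the scalars $\theta_i,\theta_j$ are distinct (the eigenvalues of $A^{(1)}$ are mutually distinct, as noted after Definition \ref{def:EP}), and that $\theta_i,\theta_j$ are nonadjacent in $\mathcal{D}$ by the previous paragraph. Applying Lemma \ref{lem:Pmeaning} to this pair immediately yields $E_i^{(1)}B^{(2)}E_j^{(1)}=0$ and $E_i^{(1)}B^{(3)}E_j^{(1)}=0$, completing the argument. There is no substantive obstacle here: the real work has been done in Lemma \ref{lem:Pmeaning} (translating the $q$-Dolan/Grady relation into a restriction on matrix entries between eigenspaces of $A^{(1)}$) and in Lemma \ref{lem:Path} (establishing that $\mathcal{D}$ is a path under the chosen labeling), and the present lemma simply packages those two results together.
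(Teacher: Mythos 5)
Your proposal is correct and follows the same route as the paper: the paper's proof simply observes that for $|i-j|>1$ the vertices $\theta_i,\theta_j$ are distinct and nonadjacent in the path $\mathcal D$ (labeled as in Lemma \ref{lem:Path}), and then invokes Lemma \ref{lem:Pmeaning}. Your extra degree-counting argument that adjacency coincides with $|i-j|=1$ is a valid (if slightly more elaborate) way of making explicit what the paper leaves implicit.
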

\begin{proof} The vertices $\theta_i, \theta_j$ of $\mathcal D$ are distinct and nonadjacent. The result follows in view of  Lemma \ref{lem:Pmeaning}.
\end{proof}

\begin{definition} \label{def:ti} \rm For $0 \leq i \leq d$ we define a scalar $t_i = t_i^{(1)}$ in $\mathbb F$ by
\begin{align*}
t_i = a^{2i-d} q^{2i(d-i)},
\end{align*}
where the scalar $a$ is from Lemma  \ref{lem:Path}.
\end{definition}
\noindent Observe that $t_i \not=0$ for $0 \leq i \leq d$.
\begin{lemma} \label{lem:tiFrac} For $1 \leq i \leq d$,
\begin{align*}
\frac{t_i}{t_{i-1}} = a^2 q^{2(d-2i+1)}.
\end{align*}
\end{lemma}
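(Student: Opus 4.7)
The plan is to prove this by direct substitution from Definition \ref{def:ti} and straightforward bookkeeping of exponents; no auxiliary lemmas or structural ideas are needed.

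First I would write
\begin{align*}
\frac{t_i}{t_{i-1}} = \frac{a^{2i-d} q^{2i(d-i)}}{a^{2(i-1)-d} q^{2(i-1)(d-i+1)}}
\end{align*}
using Definition \ref{def:ti}. The $a$-part is immediate: the exponent in the numerator minus the exponent in the denominator is $(2i-d)-(2(i-1)-d)=2$, giving the factor $a^2$.

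The only remaining task is to verify that the $q$-exponent equals $2(d-2i+1)$. I would expand
\begin{align*}
2i(d-i) - 2(i-1)(d-i+1) &= 2\bigl[ id - i^2 - (i-1)(d-i+1) \bigr] \\
&= 2\bigl[ id - i^2 - (id - d - i^2 + 2i - 1) \bigr] \\
&= 2(d - 2i + 1),
\end{align*}
and conclude the identity. Note that $t_{i-1}\neq 0$ (since $a\neq 0$ by Lemma \ref{lem:Path}), so the division is legitimate for $1 \leq i \leq d$.

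There is no real obstacle here — this is a purely computational lemma whose role is to record a useful ratio for later use in Section 6. The only thing to watch for is a sign or off-by-one error in the exponent of $q$, which is why I would carry out the expansion of $(i-1)(d-i+1)$ term by term rather than try to simplify in one step.
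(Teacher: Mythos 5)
Your proof is correct and is exactly the paper's approach: the paper simply declares the identity ``immediate from Definition \ref{def:ti},'' and your explicit exponent bookkeeping (yielding $a^2$ and $q^{2(d-2i+1)}$) is the computation being alluded to.
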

\begin{proof} Immediate from Definition \ref{def:ti}.
\end{proof}

\begin{lemma} \label{lem:tiFact} For $0 \leq i,j\leq d$ such that $\vert i-j\vert =1$, 
\begin{align*}
\frac{t_j}{t_i} = 1 + \frac{\theta_i - \theta_j}{q-q^{-1}} \, \frac{q \theta_i - q^{-1} \theta_j}{q^2 - q^{-2}}.
\end{align*}
\end{lemma}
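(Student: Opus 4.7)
The plan is a direct verification using the closed forms already available: $\theta_i = aq^{d-2i} + a^{-1}q^{2i-d}$ from Lemma~\ref{lem:Path} and $t_i = a^{2i-d}q^{2i(d-i)}$ from Definition~\ref{def:ti}. The hypothesis $|i-j|=1$ gives two cases, $j=i+1$ and $j=i-1$, which I would handle in parallel; only the first is spelled out here.

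For $j=i+1$, I would first compute the two factors on the right-hand side separately. Collecting the $a$ and $a^{-1}$ contributions in $\theta_i - \theta_{i+1}$ pulls out a $(q-q^{-1})$, giving
\[
\frac{\theta_i - \theta_{i+1}}{q-q^{-1}} \;=\; aq^{d-2i-1} - a^{-1}q^{2i-d+1}.
\]
For the second factor, the key observation is that in $q\theta_i - q^{-1}\theta_{i+1}$ the two $a^{-1}$ contributions coincide (both equal $a^{-1}q^{2i-d+1}$) and therefore cancel, leaving
\[
\frac{q\theta_i - q^{-1}\theta_{i+1}}{q^2-q^{-2}} \;=\; aq^{d-2i-1}.
\]
Multiplying these and adding $1$ telescopes to $a^2 q^{2(d-2i-1)}$, which by Lemma~\ref{lem:tiFrac} is precisely $t_{i+1}/t_i$.

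The case $j=i-1$ is handled by the mirror computation; this time the $a$ contributions cancel in $q\theta_i - q^{-1}\theta_{i-1}$, the second factor reduces to $a^{-1}q^{2i-d-1}$, and the overall expression collapses to $a^{-2}q^{-2(d-2i+1)} = t_{i-1}/t_i$, again by Lemma~\ref{lem:tiFrac}.

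There is no real obstacle, as this is essentially bookkeeping in $q$ and $a$. The only conceptual point worth highlighting is the cancellation in $q\theta_i - q^{-1}\theta_j$ for $|i-j|=1$: it is exactly this cancellation that makes the denominator $q^2-q^{-2}$ divide the numerator cleanly and produces the simple multiplicative form on the right-hand side, which is the structural reason Lemma~\ref{lem:tiFact} holds.
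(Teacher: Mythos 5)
Your proposal is correct and matches the paper's approach: the paper's proof is the one-line remark that the identity "is readily checked using \eqref{eq:eigenval} and Lemma \ref{lem:tiFrac}," and your computation is exactly that check, carried out explicitly (including the key cancellation in $q\theta_i-q^{-1}\theta_j$ and the final comparison with $t_j/t_i$ via Lemma \ref{lem:tiFrac}). All the intermediate expressions you give are accurate in both cases $j=i\pm 1$.
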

\begin{proof} This is readily checked using \eqref{eq:eigenval} and Lemma \ref{lem:tiFrac}.
\end{proof}

\begin{definition}\label{def:Psi} \rm Define a map $\Psi =\Psi^{(1)}$ in ${\rm End}(V)$ by
\begin{align*}
\Psi = \sum_{i=0}^d t_i E^{(1)}_i,
\end{align*}
where the scalars $\lbrace t_i \rbrace_{i=0}^d$ are from Definition \ref{def:ti}.
\end{definition}

\begin{lemma} \label{lem:PsiInv}The map $\Psi$ is invertible. Moreover,
\begin{align*}
\Psi^{-1} = \sum_{i=0}^d t_i^{-1} E^{(1)}_i.
\end{align*}
\end{lemma}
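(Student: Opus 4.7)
The plan is to verify the asserted formula for $\Psi^{-1}$ by direct computation, using only the orthogonality of the primitive idempotents, the fact that they sum to $I$, and the nonvanishing of the scalars $t_i$.

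First, I would note that $t_i \neq 0$ for $0 \leq i \leq d$ (this was observed immediately after Definition \ref{def:ti}, since $t_i = a^{2i-d} q^{2i(d-i)}$ with $a \neq 0$ by Lemma \ref{lem:Path} and $q$ not a root of unity). Hence the scalars $t_i^{-1}$ exist in $\mathbb F$, so the candidate inverse $\Psi' := \sum_{i=0}^d t_i^{-1} E_i^{(1)}$ is a well-defined element of ${\rm End}(V)$.

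Next, I would compute the product $\Psi \Psi'$ using Definition \ref{def:Psi} and bilinearity:
\begin{align*}
\Psi \Psi' = \Bigl(\sum_{i=0}^d t_i E^{(1)}_i\Bigr)\Bigl(\sum_{j=0}^d t_j^{-1} E^{(1)}_j\Bigr) = \sum_{i=0}^d \sum_{j=0}^d t_i t_j^{-1} E^{(1)}_i E^{(1)}_j.
\end{align*}
Applying the orthogonality relation $E^{(1)}_i E^{(1)}_j = \delta_{i,j} E^{(1)}_i$ from Section 2, the double sum collapses to $\sum_{i=0}^d E^{(1)}_i = I$. The same computation gives $\Psi' \Psi = I$. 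Therefore $\Psi$ is invertible with $\Psi^{-1} = \Psi'$, as claimed.

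I do not anticipate any real obstacle; this is a routine verification that relies only on facts already established (orthogonality and completeness of $\{E_i^{(1)}\}_{i=0}^d$ and $t_i \neq 0$). The only thing to be careful about is remembering to state both $\Psi \Psi' = I$ and $\Psi' \Psi = I$ (though they follow from the identical calculation), so that invertibility is properly concluded.
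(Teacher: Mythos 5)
Your proposal is correct and matches the paper's approach; the paper simply compresses the same verification (orthogonality $E^{(1)}_iE^{(1)}_j=\delta_{i,j}E^{(1)}_i$, completeness $\sum_i E^{(1)}_i=I$, and $t_i\neq 0$) into the phrase ``by Definition \ref{def:Psi} and the construction.'' Your spelled-out computation is exactly what that phrase refers to.
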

\begin{proof} By Definition \ref{def:Psi} and the construction.
\end{proof}

\begin{theorem} \label{thm:LPsi} For $X \in \mathbb O_q$ the following holds on $V$:
\begin{align} \label{eq:LX}
L_1(X) = \Psi^{-1} X \Psi.
\end{align}
\end{theorem}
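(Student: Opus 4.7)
The plan is to verify the identity \eqref{eq:LX} on each of the six standard generators of $\mathbb{O}_q$, then extend to all of $\mathbb{O}_q$ by an algebra-homomorphism argument. Both sides of \eqref{eq:LX}, viewed as maps $\mathbb{O}_q \to \mathrm{End}(V)$, are algebra homomorphisms: the left side is the composition of $L_1$ with the action of $\mathbb{O}_q$ on $V$, and the right side is the composition of the action with conjugation by the invertible element $\Psi$. Since the standard generators generate $\mathbb{O}_q$, it suffices to check \eqref{eq:LX} on those generators.

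The easy cases are $X \in \{A_1, A_2, A_3, B_1\}$. On these elements $L_1$ acts trivially by Theorem \ref{thm:main1}. For $X = A_1$, the operator $A_1$ on $V$ is $A^{(1)} = \sum_i \theta_i E_i^{(1)}$ and $\Psi = \sum_i t_i E_i^{(1)}$, so both are diagonal in the same basis and commute, giving $\Psi^{-1} A^{(1)} \Psi = A^{(1)}$. For $X \in \{A_2, A_3, B_1\}$, Definition \ref{def:bbO}(i),(ii) says $X$ commutes with $A_1$; hence $X$ preserves each eigenspace of $A^{(1)}$, so $X$ commutes with each $E_i^{(1)}$ and therefore with $\Psi$.

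The substantive case is $X = B_j$ with $j \in \{2,3\}$. I would expand $B^{(j)} = \sum_{i,k} E_i^{(1)} B^{(j)} E_k^{(1)}$ and compute both sides in this decomposition. On one hand,
\begin{align*}
\Psi^{-1} B^{(j)} \Psi \;=\; \sum_{i,k} \frac{t_k}{t_i}\, E_i^{(1)} B^{(j)} E_k^{(1)}.
\end{align*}
On the other hand, using $A^{(1)} E_i^{(1)} = \theta_i E_i^{(1)}$ and $E_k^{(1)} A^{(1)} = \theta_k E_k^{(1)}$, one finds
\begin{align*}
q\,(A^{(1)})^2 B^{(j)} - (q+q^{-1}) A^{(1)} B^{(j)} A^{(1)} + q^{-1} B^{(j)} (A^{(1)})^2
\;=\; \sum_{i,k} (q\theta_i - q^{-1}\theta_k)(\theta_i - \theta_k)\, E_i^{(1)} B^{(j)} E_k^{(1)},
\end{align*}
so the formula for $L_1(B_j)$ from Theorem \ref{thm:main1} reads
\begin{align*}
L_1(B^{(j)}) \;=\; \sum_{i,k} \left(1 + \frac{(\theta_i - \theta_k)(q\theta_i - q^{-1}\theta_k)}{(q-q^{-1})(q^2-q^{-2})}\right) E_i^{(1)} B^{(j)} E_k^{(1)}.
\end{align*}

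Comparison of the two expressions now reduces to a scalar identity for each $(i,k)$. When $i = k$, both coefficients equal $1$. When $|i-k| > 1$, Lemma \ref{lem:Pmeaning2} kills the summand $E_i^{(1)} B^{(j)} E_k^{(1)}$, so the coefficient is irrelevant. The decisive case $|i-k| = 1$ is exactly the content of Lemma \ref{lem:tiFact}, which was crafted for precisely this purpose. The main (mild) obstacle is organizing the bookkeeping in this spectral expansion and confirming that the factorization $q\theta_i^2 - (q+q^{-1})\theta_i\theta_k + q^{-1}\theta_k^2 = (\theta_i - \theta_k)(q\theta_i - q^{-1}\theta_k)$ meshes with Lemma \ref{lem:tiFact}; once that is in place the verification is immediate, and the theorem follows.
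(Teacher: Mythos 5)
Your proposal is correct and follows essentially the same route as the paper's proof: reduce to the standard generators, handle $A_1,A_2,A_3,B_1$ by noting that $\Psi$ is built from the $E_i^{(1)}$ and hence commutes with anything commuting with $A^{(1)}$, and for $B_2,B_3$ compare the $(i,k)$-blocks $E_i^{(1)}\,\cdot\,E_k^{(1)}$ of both sides using Lemma \ref{lem:Pmeaning2} for $|i-k|>1$ and Lemma \ref{lem:tiFact} for $|i-k|=1$. The factorization $q\theta_i^2-(q+q^{-1})\theta_i\theta_k+q^{-1}\theta_k^2=(\theta_i-\theta_k)(q\theta_i-q^{-1}\theta_k)$ you flag is exactly the one the paper uses, so there is no gap.
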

\begin{proof}  Without loss, we may assume that $X$ is a standard generator for $\mathbb O_q$.
 First assume that $X$ is one of $A_1, A_2, A_3, B_1$. Then $X$ commutes with $A_1$.
  By Theorem \ref{thm:main1},  $L_1(X)=X$.
 On $V$ we have $A_1=A^{(1)}$, so $X$ commutes with $A^{(1)}$ on $V$. For $0 \leq i \leq d$
 the element $E^{(1)}_i$ is a polynomial in $A^{(1)}$. By this and Definition \ref{def:Psi}, 
  $\Psi$ is a polynomial in $A^{(1)}$. Consequently, $X$ commutes with $\Psi$ on $V$. Thus,
  $X = \Psi^{-1}X \Psi$ on $V$. By these comments, \eqref{eq:LX} holds
   on $V$.
  Next, we assume that $X$  is one of $B_2, B_3$.  By Theorem \ref{thm:main1}, 
  \begin{align*}
  L_1(X)= X + \frac{q A_1^2 X - (q+q^{-1}) A_1 X A_1 + q^{-1} X A_1^2 }{(q-q^{-1})(q^2-q^{-2})}.
  \end{align*}
We are trying to show that   \eqref{eq:LX} holds on $V$. Since $I = \sum_{i=0}^d E^{(1)}_i$, it suffices to show that the following
holds on $V$ for $0 \leq i,j\leq d$:
\begin{align} \label{eq:needed}
E_i^{(1)} L_1(X) E_j^{(1)} = E_i^{(1)} \Psi^{-1} X \Psi E_j^{(1)}.
\end{align}
\noindent Let $i,j$ be given. On $V$ we have
\begin{align*}
E_i^{(1)} L_1(X) E_j^{(1)} &= E_i^{(1)} \biggl(   X + \frac{q \bigl( A^{(1)}\bigr)^2 X - (q+q^{-1}) A^{(1)}  X A^{(1)} + q^{-1} X \bigl( A^{(1)} \bigr)^2 }{(q-q^{-1})(q^2-q^{-2})}    \biggr) E_j^{(1)}
\\
&=E_i^{(1)} X E_j^{(1)} \biggl( 1 + \frac{ q \theta_i^2 - (q+q^{-1}) \theta_i \theta_j + q^{-1} \theta_j^2}{(q-q^{-1})(q^2-q^{-2})}     \biggr) \\
&=E_i^{(1)} X E_j^{(1)} \biggl( 1 + \frac{\theta_i - \theta_j}{q-q^{-1}} \, \frac{q\theta_i - q^{-1} \theta_j}{q^2-q^{-2}}    \biggr).
\end{align*}
\noindent By Definition \ref{def:Psi} and Lemma \ref{lem:PsiInv}, the following holds on $V$:
\begin{align*}
E_i^{(1)} \Psi^{-1} X \Psi E_j^{(1)} = E_i^{(1)} X E_j^{(1)} \frac{t_j}{t_i}.
\end{align*}
We can now easily show that \eqref{eq:needed} holds on $V$. First assume that $\vert i-j\vert >1$. Then \eqref{eq:needed} holds on $V$ because 
$E_i^{(1)} X E_j^{(1)}=0$ on $V$ by Lemma \ref{lem:Pmeaning2}.
Next assume that $i=j$. Then \eqref{eq:needed} holds on $V$ because each side is equal to $E_i^{(1)} X E_i^{(1)}$. Next assume that $\vert i-j\vert =1$.
Then \eqref{eq:needed} holds on $V$ in view of Lemma  \ref{lem:tiFact}.
We have shown that \eqref{eq:needed} holds on $V$ for $0 \leq i,j\leq d$. Therefore   \eqref{eq:LX} holds on $V$. 
By the above comments,  \eqref{eq:LX} holds on $V$ for every standard generator $X$ of $\mathbb O_q$.
The result follows.
\end{proof}

\noindent  The following example will be discussed in Section 7.

\begin{example}\label{ex:d1} \rm Assume that $d=1$. Then
\begin{align*}
&\theta_0 = a q + a^{-1} q^{-1}, \qquad \qquad \theta_1 = a q^{-1} + a^{-1} q, \qquad \qquad a^2 \not=1, \\
& A^{(1)} = \theta_0 E^{(1)}_0 + \theta_1 E^{(1)}_1 = (a-a^{-1})(q-q^{-1}) E^{(1)}_0 + (aq^{-1}+a^{-1}q) I
\end{align*}
and also
\begin{align*}
&t_0 = a^{-1}, \qquad \qquad t_1 = a, \\
\Psi &=a^{-1} E^{(1)}_0 + a E^{(1)}_1 = (a^{-1}-a) E^{(1)}_0 + a I, \\
\Psi^{-1} &= aE^{(1)}_0 + a^{-1} E^{(1)}_1 = (a-a^{-1}) E^{(1)}_0 + a^{-1} I.
\end{align*}
\end{example}

\noindent The following  definition is motivated by Theorem \ref{thm:LPsi}.
\begin{definition}\label{def:Linter} \rm The map $\Psi$ in Definition \ref{def:Psi}
is called the {\it Lusztig intertwiner}  for $L_1$ and $V$. The Lusztig interwiners for
$L_2$, $L_3$, $L^*_1$, $L^*_2$, $L^*_3$ and $V$ are similarly defined.
\end{definition}

\section{An irreducible $\mathbb O_q$-module with dimension 5}

\noindent In this section, we display an irreducible $\mathbb O_q$-module $V$ that has dimension 5. 
The actions of the standard $\mathbb O_q$-generators on $V$, and the Lusztig intertwiners for $V$, will be represented by $5 \times 5$ matrices.
\medskip

\noindent  Let ${\rm Mat}_5(\mathbb F)$ denote the algebra of $5 \times 5$ matrices that have all entries in $\mathbb F$.  Define the vector space $V=\mathbb F^5$ (column vectors).
The algebra  ${\rm Mat}_5(\mathbb F)$ acts on $V$ by left multiplication. Throughout this section, fix nonzero scalars
 $n$, $\lbrace a_i\rbrace_{i=1}^3$, $\lbrace b_i \rbrace_{i=1}^3$ in $\mathbb F$ such that
$a_i^2 \not=1$ and $b_i^2 \not=1$ for $i \in \lbrace 1,2,3\rbrace$. Using these scalars, 
we will turn $V$ into an irreducible $\mathbb O_q$-module.

\begin{definition}\label{def:AAABBB} \rm We define some matrices in ${\rm Mat}_5(\mathbb F)$:
\begin{small}
\begin{align*}
&A^{(1)} = \\
&
\begin{pmatrix}  a_1 q+a_1^{-1} q^{-1}  & 0 &(a_1-a_1^{-1})(q-q^{-1}) & (a_1-a_1^{-1})(q-q^{-1})& 0 \\
                                          0& a_1 q+a_1^{-1} q^{-1}&0&0&(a_1-a_1^{-1})(q-q^{-1}) \\
                                          0&0&a_1 q^{-1} +a_1^{-1} q&0&0\\
                                          0&0&0&a_1 q^{-1} +a_1^{-1} q&0\\
                                          0&0&0&0&a_1 q^{-1} +a_1^{-1} q
                                          \end{pmatrix},
                                           \end{align*}
 \begin{align*}
&A^{(2)} = \\
&
\begin{pmatrix}  a_2 q+a_2^{-1} q^{-1}  & (a_2-a_2^{-1})(q-q^{-1}) & 0& (a_2-a_2^{-1})(q-q^{-1})& 0 \\
                                          0& a_2 q^{-1}+a_2^{-1} q &0&0&0 \\
                                          0&0&a_2 q +a_2^{-1} q^{-1} &0&(a_2-a_2^{-1})(q-q^{-1})\\
                                          0&0&0&a_2 q^{-1} +a_2^{-1} q&0\\
                                          0&0&0&0&a_2 q^{-1} +a_2^{-1} q
                                          \end{pmatrix},
                                           \end{align*}
   \begin{align*}
&A^{(3)} = \\
&
\begin{pmatrix}  a_3 q+a_3^{-1} q^{-1}  & (a_3-a_3^{-1})(q-q^{-1}) &  (a_3-a_3^{-1})(q-q^{-1}) & 0 & 0 \\
                                          0& a_3 q^{-1}+a_3^{-1} q &0&0&0 \\
                                          0&0&a_3 q^{-1} +a_3^{-1} q &0&0\\
                                          0&0&0&a_3 q +a_3^{-1} q^{-1}&(a_3-a_3^{-1})(q-q^{-1})\\
                                          0&0&0&0&a_3 q^{-1} +a_3^{-1} q
                                          \end{pmatrix},
                                           \end{align*}
 \begin{align*}
 &B^{(1)} =\begin{pmatrix} b_1q^{-1}+b_1^{-1} q&0&0&0&0\\
                          \frac{(b_1-b_1^{-1})(q-q^{-1})}{n}&b_1 q + b_1^{-1} q^{-1}&0&0&0\\
                          0&0&b_1q^{-1}+b_1^{-1} q &0&0\\
                          0&0&0&b_1q^{-1}+b_1^{-1} q &0\\
                          0&0&\frac{(b_1-b_1^{-1})(q-q^{-1})}{n}&\frac{(b_1-b_1^{-1})(q-q^{-1})}{n}&b_1 q + b_1^{-1} q^{-1}
 \end{pmatrix},
 \end{align*}                                       
 \begin{align*}
 &B^{(2)} =\begin{pmatrix} b_2q^{-1}+b_2^{-1} q&0&0&0&0\\
                         0&b_2 q^{-1} + b_2^{-1} q&0&0&0\\
                          \frac{(b_2-b_2^{-1})(q-q^{-1})}{n}&0&b_2q+b_2^{-1} q^{-1} &0&0\\
                          0&0&0&b_2q^{-1}+b_2^{-1} q &0\\
                          0&\frac{(b_2-b_2^{-1})(q-q^{-1})}{n}&0&\frac{(b_2-b_2^{-1})(q-q^{-1})}{n}&b_2 q + b_2^{-1} q^{-1}
 \end{pmatrix},
 \end{align*}   
  \begin{align*}
 &B^{(3)} =\begin{pmatrix} b_3q^{-1}+b_3^{-1} q&0&0&0&0\\
                         0&b_3 q^{-1} + b_3^{-1} q&0&0&0\\
                          0&0&b_3q^{-1}+b_3^{-1} q &0&0\\
                          \frac{(b_3-b_3^{-1})(q-q^{-1})}{n}&0&0&b_3q+b_3^{-1} q^{-1} &0\\
                          0&\frac{(b_3-b_3^{-1})(q-q^{-1})}{n}&\frac{(b_3-b_3^{-1})(q-q^{-1})}{n}&0&b_3 q + b_3^{-1} q^{-1}
 \end{pmatrix}.
 \end{align*}                                                                    
                                           
                                       \end{small}

\end{definition}

\noindent Shortly, we will turn $V$ into an irreducible $\mathbb O_q$-module on which $A_i$ acts as $A^{(i)}$ and $B_i$ acts as $B^{(i)}$ 
 for $i \in \lbrace 1,2,3\rbrace$. In order to simplify the calculations, we give some
preliminary results.


\begin{definition} \label{def:5by5}\rm We define some matrices in ${\rm Mat}_5(\mathbb F)$:
\begin{align*}
E^{(1)} = \begin{pmatrix} 1&0&1&1&0 \\
                                         0&1&0&0& 1\\
                                         0&0&0&0&0\\
                                         0&0&0&0&0\\
                                        0 &0&0&0&0
\end{pmatrix},
\qquad \quad 
E^{*(1)} = \begin{pmatrix} 0&0&0&0&0 \\
                                         n^{-1}&1&0&0& 0\\
                                         0&0&0&0&0\\
                                         0&0&0&0&0\\
                                        0 &0&n^{-1}&n^{-1}&1
\end{pmatrix},
\end{align*}
\begin{align*}
E^{(2)} = \begin{pmatrix} 1&1&0&1&0 \\
                                         0&0&0&0& 0\\
                                         0&0&1&0&1\\
                                         0&0&0&0&0\\
                                        0 &0&0&0&0
\end{pmatrix},
\qquad \quad 
E^{*(2)} = \begin{pmatrix} 0&0&0&0&0 \\
                                         0&0&0&0& 0\\
                                         n^{-1}&0&1&0&0\\
                                         0&0&0&0&0\\
                                        0 &n^{-1}&0&n^{-1}&1
\end{pmatrix},
\end{align*}
\begin{align*}
E^{(3)} = \begin{pmatrix} 1&1&1&0&0 \\
                                         0&0&0&0& 0\\
                                         0&0&0&0&0\\
                                         0&0&0&1&1\\
                                        0 &0&0&0&0
\end{pmatrix},
\qquad \quad 
E^{*(3)} = \begin{pmatrix} 0&0&0&0&0 \\
                                         0&0&0&0& 0\\
                                         0&0&0&0&0\\
                                         n^{-1}&0&0&1&0\\
                                        0 &n^{-1}&n^{-1}&0&1
\end{pmatrix}.
\end{align*}
\end{definition}

\begin{lemma} \label{lem:ABEE} For $i \in \lbrace 1,2,3\rbrace$ we have
\begin{align}
A^{(i)} &= (a_i - a^{-1}_i)(q-q^{-1}) E^{(i)} + (a_i q^{-1} + a^{-1}_i q)I, \label{eq:AE} \\
B^{(i)} &= (b_i - b^{-1}_i)(q-q^{-1}) E^{*(i)} + (b_i q^{-1} + b^{-1}_i q)I, \label{eq:BE} \\
&\bigl(E^{(i)}\bigr)^2 = E^{(i)}, \qquad \qquad \bigl(E^{*(i)}\bigr)^2 = E^{*(i)}. \label{eq:EE}
\end{align}
\end{lemma}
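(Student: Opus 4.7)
The plan is direct matrix verification in all three cases; no conceptual difficulty arises, only bookkeeping with the explicit $5 \times 5$ matrices from Definitions \ref{def:AAABBB} and \ref{def:5by5}. The matrices $E^{(i)}$, $E^{*(i)}$ in Definition \ref{def:5by5} have been engineered so that \eqref{eq:AE}, \eqref{eq:BE}, \eqref{eq:EE} hold by inspection, so the argument amounts to pointing out this compatibility.

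For \eqref{eq:AE} I would first record the elementary scalar identity
\[
(a_i q + a_i^{-1} q^{-1}) - (a_i q^{-1} + a_i^{-1} q) = (a_i - a_i^{-1})(q - q^{-1}).
\]
Inspecting $A^{(i)}$, each diagonal entry is either $a_i q + a_i^{-1} q^{-1}$ or $a_i q^{-1} + a_i^{-1} q$, and the former occurs precisely at the indices where the diagonal of $E^{(i)}$ equals $1$. The off-diagonal entries of $A^{(i)}$ are either $0$ or $(a_i - a_i^{-1})(q - q^{-1})$, and the positions carrying the latter value coincide exactly with the $1$-entries in the strict upper triangle of $E^{(i)}$. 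Adding $(a_i - a_i^{-1})(q - q^{-1}) E^{(i)}$ to $(a_i q^{-1} + a_i^{-1} q) I$ therefore reproduces $A^{(i)}$ entry by entry for each $i \in \{1,2,3\}$. The proof of \eqref{eq:BE} is formally identical after replacing $a_i$ by $b_i$ and $E^{(i)}$ by $E^{*(i)}$; the factor $n^{-1}$ present in the subdiagonal entries of $B^{(i)}$ matches the factor $n^{-1}$ built into $E^{*(i)}$.

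For \eqref{eq:EE}, both idempotencies follow from direct matrix multiplication using the extreme sparsity of $E^{(i)}$ and $E^{*(i)}$ in Definition \ref{def:5by5}. Each $E^{(i)}$ has only two nonzero rows (rows $1$ and $2$, each with a leading $1$ on the diagonal), so the three zero rows stay zero after squaring, while rows $1$ and $2$ are easily checked to reproduce themselves because columns $1$ and $2$ each contain exactly one nonzero entry, located on the diagonal. The same row-by-row check handles $(E^{*(i)})^2 = E^{*(i)}$: each $E^{*(i)}$ has two nonzero rows supported on a diagonal $1$ plus one or two off-diagonal entries $n^{-1}$, and the relevant columns contain only one nonzero entry each, again on the diagonal.

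There is no main obstacle: the entire proof is a finite list of entry comparisons. If space is a concern, I would present only the case $i = 1$ in detail and remark that the cases $i = 2, 3$ are identical after permuting the indices $\{2,3,4,5\}$ accordingly.
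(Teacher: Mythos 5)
Your proposal is correct and follows essentially the same route as the paper, whose proof of \eqref{eq:AE} and \eqref{eq:BE} is simply to compare Definitions \ref{def:AAABBB} and \ref{def:5by5}, and whose proof of \eqref{eq:EE} is direct matrix multiplication. You merely spell out the entry-by-entry bookkeeping that the paper leaves implicit.
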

\begin{proof} To get \eqref{eq:AE} and \eqref{eq:BE}, compare Definitions \ref{def:AAABBB} and \ref{def:5by5}.
To get \eqref{eq:EE}, use Definition \ref{def:5by5} and matrix multiplication.
\end{proof}

\begin{lemma} \label{lem:EE} The following relations hold in ${\rm Mat}_5(\mathbb F)$:
\begin{enumerate}
\item[\rm (i)] For $i,j \in \lbrace 1,2,3\rbrace$,
\begin{align*}
\lbrack E^{(i)}, E^{(j)} \rbrack=0, \qquad \qquad \lbrack E^{*(i)}, E^{*(j)} \rbrack=0.
\end{align*}
\item[\rm (ii)] For $i \in \lbrace 1,2,3\rbrace$,
\begin{align*}
\lbrack E^{(i)}, E^{*(i)} \rbrack=0.
\end{align*}
\item[\rm (iii)] For distinct $i,j \in \lbrace 1,2,3 \rbrace$,
\begin{align*}
n E^{(i)} E^{*(j)} E^{(i)} = E^{(i)}, \qquad \qquad n E^{*(i)} E^{(j)} E^{*(i)} = E^{*(i)}.
\end{align*}
\end{enumerate}
\end{lemma}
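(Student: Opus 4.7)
The plan is to verify the three items of the lemma by direct matrix computation from Definition \ref{def:5by5}. The matrices $E^{(i)}$ and $E^{*(i)}$ are sparse and highly structured, so the calculation, though mechanical, is very short once one exploits the support structure.

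The key structural observation is that $E^{(i)}$ has non-zero entries only in rows $1$ and $i+1$, and acts on the standard basis of $V = \mathbb F^5$ as a projection onto $\mathrm{span}(e_1, e_{i+1})$: it fixes $e_1$ and $e_{i+1}$ and sends each of the other three basis vectors to either $e_1$ or $e_{i+1}$ according to Definition \ref{def:5by5}; in particular $E^{(i)} e_{j+1} = e_1$ for $j \neq i$ and $E^{(i)} e_5 = e_{i+1}$. Dually, $E^{*(i)}$ has non-zero entries only in rows $i+1$ and $5$, fixes $e_{i+1}$ and $e_5$, sends $e_1$ to $n^{-1} e_{i+1}$, and sends each of the remaining two basis vectors $e_{k+1}$ (for $k \in \{1,2,3\} \setminus \{i\}$) to $n^{-1} e_5$. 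These support constraints kill most of the cross-terms in the products to be computed.

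For part (i) with $i \neq j$, both $E^{(i)} E^{(j)}$ and $E^{(j)} E^{(i)}$ are supported only in row $1$ (rows $i{+}1$ and $j{+}1$ of either product vanish because the inner factor has a zero row there), and a direct comparison of row $1$ entries then yields the equality. The identity $E^{*(i)} E^{*(j)} = E^{*(j)} E^{*(i)}$ is verified by an entirely analogous computation. For part (ii), both $E^{(i)} E^{*(i)}$ and $E^{*(i)} E^{(i)}$ are supported only in row $i+1$ (the unique row in the common row-support of the two factors), and comparing that row entrywise finishes the check.

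For part (iii), I would evaluate $E^{(i)} E^{*(j)} E^{(i)}$ directly on each standard basis vector. Since the image of $E^{(i)}$ is $\mathrm{span}(e_1, e_{i+1})$, it suffices to compute $E^{(i)} E^{*(j)}$ on these two vectors. The structural description gives $E^{*(j)} e_1 = n^{-1} e_{j+1}$ and $E^{*(j)} e_{i+1} = n^{-1} e_5$ (here the assumption $i \neq j$ is essential, so that $e_{i+1}$ is one of the ``other two'' basis vectors for $E^{*(j)}$ rather than the fixed vector $e_{j+1}$). Applying $E^{(i)}$ then sends $e_{j+1} \mapsto e_1$ and $e_5 \mapsto e_{i+1}$, so $E^{(i)} E^{*(j)}$ acts on $e_1$ and $e_{i+1}$ as multiplication by $n^{-1}$. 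Composing with $E^{(i)}$ on the right gives $E^{(i)} E^{*(j)} E^{(i)} = n^{-1} E^{(i)}$, which is the first identity of (iii); the dual identity $n E^{*(i)} E^{(j)} E^{*(i)} = E^{*(i)}$ is verified by the analogous chase. The main obstacle — ensuring that the factor $n^{-1}$ arises exactly once (rather than twice or not at all) — is resolved precisely by the condition $i \neq j$, which forces $E^{*(j)}(e_{i+1}) = n^{-1} e_5$ instead of $e_5$.
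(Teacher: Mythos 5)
Your proof is correct and takes essentially the same route as the paper, whose entire proof is ``Use Definition \ref{def:5by5} and matrix multiplication''; your description of $E^{(i)}$ and $E^{*(i)}$ via their action on the standard basis is just an efficient way to organize that computation, and the basis-vector chases for (i)--(iii) all check out. (One small aside: the parenthetical in part (ii) attributing the row-$(i{+}1)$ support of $E^{(i)}E^{*(i)}$ and $E^{*(i)}E^{(i)}$ to the ``common row-support of the two factors'' is not by itself a valid reason --- the row support of a product is contained in that of the \emph{left} factor only --- but the support claim is true, as your own computation of the images shows.)
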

\begin{proof} Use Definition \ref{def:5by5} and matrix multiplication.
\end{proof} 

\begin{proposition} \label{lem:OQM} The vector space $V$ becomes an $\mathbb O_q$-module on which
 $A_i$ acts as $A^{(i)} $ and $B_i$ acts as $B^{(i)}$ for $i \in \lbrace 1,2,3\rbrace$.
\end{proposition}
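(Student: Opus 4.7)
The plan is to verify by direct computation that the six matrices $A^{(i)}, B^{(i)}$ of Definition \ref{def:AAABBB} satisfy the defining relations (i)--(iii) of $\mathbb{O}_q$ listed in Definition \ref{def:bbO}. All the work is organized around the decompositions $A^{(i)} = \alpha_i E^{(i)} + \beta_i I$ and $B^{(i)} = \gamma_i E^{*(i)} + \delta_i I$ from Lemma \ref{lem:ABEE}, together with the projector identities of Lemma \ref{lem:EE}. The commutation relations in Definition \ref{def:bbO}(i)--(ii) come essentially for free: each $A^{(i)}$ lies in the subalgebra $\mathbb{F}\,I + \mathbb{F}\,E^{(i)}$ of $\mathrm{Mat}_5(\mathbb{F})$, each $B^{(i)}$ lies in $\mathbb{F}\,I + \mathbb{F}\,E^{*(i)}$, and the commutativity relations of Lemma \ref{lem:EE}(i)--(ii) immediately imply $[A^{(i)}, A^{(j)}]=0$, $[B^{(i)}, B^{(j)}]=0$ for distinct $i,j$ and $[A^{(i)}, B^{(i)}]=0$.

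The bulk of the work is the $q$-Dolan/Grady relations in Definition \ref{def:bbO}(iii). Fix distinct $i,j$ and set $A = A^{(i)} = \alpha E + \beta I$ and $B = B^{(j)} = \gamma E^* + \delta I$ with $E = E^{(i)}$ and $E^* = E^{*(j)}$. The eigenvalues of $A$ are $\theta_0 = a_i q + a_i^{-1} q^{-1}$ and $\theta_1 = a_i q^{-1} + a_i^{-1} q$, so $A$ satisfies the quadratic relation $A^2 = (\theta_0+\theta_1)A - \theta_0\theta_1 I$. Rewriting the first $q$-DG relation as the single commutator
\[
\bigl[A,\; A^2 B - (q^2+q^{-2})ABA + BA^2 + (q^2-q^{-2})^2 B\bigr] = 0,
\]
and using that $[A,X] = \alpha[E,X]$, one may strip away the outer bracket as well as the scalar $\gamma$ in $[E, B] = \gamma[E,E^*]$. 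This reduces matters to the identity
\[
A^2 C - (q^2+q^{-2})\,ACA + CA^2 + (q^2-q^{-2})^2 C = 0, \qquad C = [E, E^*].
\]
A short computation using $E^2 = E$, $(E^*)^2 = E^*$ and the cubic identity $n E E^* E = E$ of Lemma \ref{lem:EE}(iii) yields $AC + CA = (\theta_0+\theta_1)\,C$ and $ACA = \theta_0\theta_1\,C$. Combining these with $A^2 = (\theta_0+\theta_1)A - \theta_0\theta_1 I$, the left-hand side collapses to the scalar multiple $P(\theta_0,\theta_1)\cdot C$, where $P$ is the polynomial of Definition \ref{def:Poly}. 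A direct substitution shows $P(\theta_0,\theta_1) = 0$, so the first $q$-DG relation holds on $V$. The second $q$-DG relation follows by the symmetric argument, with the roles of $A,B$ and $E,E^*$ interchanged and using $nE^*EE^* = E^*$ in place of $nEE^*E = E$.

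The main obstacle is not conceptual but combinatorial: one must apply the cubic identity $nEE^*E = E$ in precisely the right places to reduce $AC+CA$ and $ACA$ to scalar multiples of $C$, and then recognize that the resulting scalar is exactly $P(\theta_0,\theta_1)$. The cancellations that make this work are the same ones that guarantee adjacency of $\theta_0$ and $\theta_1$ in the graph $\mathcal{D}$ of Definition \ref{def:diagram}; in effect, the specific matrices in Definition \ref{def:AAABBB} have been engineered so that the restriction of the would-be action to each pair $(A_i, B_j)$ with $i\neq j$ mimics the structure of a $q$-Racah eigenvalue configuration of diameter one.
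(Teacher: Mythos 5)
Your proof is correct and follows essentially the same route as the paper, whose proof simply cites Lemmas \ref{lem:ABEE} and \ref{lem:EE} and asserts that one checks the defining relations of Definition \ref{def:bbO}; you have supplied those details, and your reduction of each $q$-Dolan/Grady relation to the scalar identity $P(\theta_0,\theta_1)=0$ is sound. One minor remark: the cubic identity $nEE^*E=E$ from Lemma \ref{lem:EE}(iii) is not actually needed for the step you cite it in, since idempotency alone gives $ECE=0$ and $EC+CE=C$ for $C=[E,E^*]$, so the $q$-Dolan/Grady relations hold for any pair of idempotents whose associated eigenvalue pairs satisfy $P(\theta_0,\theta_1)=0$.
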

\begin{proof} Using Lemmas \ref{lem:ABEE}, \ref{lem:EE} one checks that the matrices $\lbrace A^{(i)}\rbrace_{i=1}^3$, $\lbrace B^{(i)}\rbrace_{i=1}^3$ satisfy
the defining relations for $\mathbb O_q$ given in Definition  \ref{def:bbO}.
\end{proof}

\noindent Our next goal is to simultaneously diagonalize $ E^{(1)}, E^{(2)}, E^{(3)}$.
\medskip

\noindent
Define a matrix in ${\rm Mat}_5(\mathbb F)$:
\begin{align*}
T = \begin{pmatrix} 1&1&1&1&1 \\
                                         0&1&0&0& 1\\
                                         0&0&1&0&1\\
                                         0&0&0&1&1\\
                                        0 &0&0&0&1
\end{pmatrix}.
\end{align*}
Note that
\begin{align*}
T^{-1} = \begin{pmatrix} 1&-1&-1&-1&2 \\
                                         0&1&0&0& -1\\
                                         0&0&1&0&-1\\
                                         0&0&0&1&-1\\
                                        0 &0&0&0&1
\end{pmatrix}.
\end{align*}

\begin{lemma} \label{lem:Tconj} We have
\begin{align*}
T E^{(1)} T^{-1} &= {\rm diag}(1,1,0,0,0), \\
T E^{(2)} T^{-1} &= {\rm diag}(1,0,1,0,0), \\
T E^{(3)} T^{-1} &= {\rm diag}(1,0,0,1,0)
\end{align*}
and
\begin{align*}
T E^{*(1)} T^{-1} = \begin{pmatrix} n^{-1}&1-n^{-1}&0&0&0 \\
                                         n^{-1}&1-n^{-1}&0&0& 0\\
                                         0&0&n^{-1}&n^{-1}&1-2n^{-1}\\
                                         0&0&n^{-1}&n^{-1}&1-2n^{-1}\\
                                        0 &0&n^{-1}&n^{-1}&1-2n^{-1}
\end{pmatrix},
\end{align*}
\begin{align*}
T E^{*(2)} T^{-1} = \begin{pmatrix} n^{-1}&0&1-n^{-1}&0& 0   \\
                                                      0&n^{-1}&0&n^{-1}& 1-2n^{-1}  \\
                                                      n^{-1}&0&1-n^{-1}&0&0   \\
                                                      0&n^{-1}&0&n^{-1}&1-2n^{-1}  \\
                                                      0&n^{-1}&0&n^{-1}&1-2n^{-1}
\end{pmatrix},
\end{align*}
\begin{align*}
T E^{*(3)} T^{-1} = \begin{pmatrix} n^{-1}&0&0&1-n^{-1}& 0   \\
                                                      0&n^{-1}&n^{-1}&0&1-2n^{-1}  \\
                                                     0& n^{-1}&n^{-1}&0&1-2n^{-1}   \\
                                                      n^{-1}&0&0&1-n^{-1}&0 \\
                                                      0&n^{-1}&n^{-1}&0&1-2n^{-1}
\end{pmatrix}.
\end{align*}

\end{lemma}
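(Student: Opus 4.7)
The plan is to verify the six claimed identities by direct matrix computation, organized so that the bookkeeping remains manageable.

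For the three identities involving $E^{(i)}$, the claimed conjugate is diagonal, so it suffices to show that the columns of $T^{-1}$ are eigenvectors of $E^{(i)}$ with eigenvalues matching the stated diagonal entries in order. From the explicit form of $T^{-1}$, its five columns are $e_1$, $e_2-e_1$, $e_3-e_1$, $e_4-e_1$, and $2e_1-e_2-e_3-e_4+e_5$, where $e_1,\ldots,e_5$ denotes the standard basis of $\mathbb{F}^5$. Reading off the columns of $E^{(1)}$ from Definition \ref{def:5by5} gives $E^{(1)}e_1=e_1$, $E^{(1)}e_2=e_2$, $E^{(1)}e_3=e_1$, $E^{(1)}e_4=e_1$, and $E^{(1)}e_5=e_2$; applying these to the five columns of $T^{-1}$ produces eigenvalues $1,1,0,0,0$ respectively. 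Hence $E^{(1)}T^{-1}=T^{-1}\,\mathrm{diag}(1,1,0,0,0)$, which is the first identity. The identities for $E^{(2)}$ and $E^{(3)}$ follow by exactly the same method after relabeling indices.

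For the three identities involving $E^{*(i)}$, the conjugate is no longer diagonal, so I would fall back on a two-stage direct computation of $TE^{*(i)}T^{-1}$. I would first form $E^{*(i)}T^{-1}$, exploiting the fact that each $E^{*(i)}$ has nonzero entries in only two of its five rows (rows $2$ and $5$ for $i=1$, rows $3$ and $5$ for $i=2$, and rows $4$ and $5$ for $i=3$), then left-multiply by the sparse upper-triangular matrix $T$. Reading off the resulting $5\times 5$ matrix entry by entry yields the stated expression in each of the three cases.

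The only real obstacle here is bookkeeping: the entries $n^{-1}$, $1-n^{-1}$, and $1-2n^{-1}$ appear repeatedly in the outputs and must be placed in the correct positions. However there is no conceptual difficulty whatsoever; the entire lemma reduces to correctly executing two rounds of matrix multiplication for each of the six identities.
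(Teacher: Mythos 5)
Your proposal is correct and matches the paper's proof, which is simply ``Routine matrix multiplication''; your eigenvector check on the columns of $T^{-1}$ for the $E^{(i)}$ cases and the row-sparsity shortcut for the $E^{*(i)}$ cases are just sensible ways of organizing that same computation, and spot-checking (e.g. $TE^{*(1)}T^{-1}$) confirms they yield the stated matrices.
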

\begin{proof}  Routine matrix multiplication.
\end{proof}

\noindent Our next goal is to simultaneously diagonalize $ E^{*(1)}, E^{*(2)}, E^{*(3)}$.
\medskip

\noindent Define a matrix in ${\rm Mat}_5(\mathbb F)$:
\begin{align*}
T^* = \begin{pmatrix} 1&0&0&0&0 \\
                                   n^{-1}&1&0&0&0 \\
                                   n^{-1}&0&1&0&0 \\
                                   n^{-1}&0&0&1& 0\\
                                   n^{-2}&n^{-1}&n^{-1}&n^{-1}&1
                                   \end{pmatrix}.
                                   \end{align*}                                 
 Note that                                 
\begin{align*}
(T^*)^{-1} = \begin{pmatrix} 1&0&0&0&0 \\
                                   -n^{-1}&1&0&0&0 \\
                                   -n^{-1}&0&1&0&0 \\
                                   -n^{-1}&0&0&1& 0\\
                                   2n^{-2}&-n^{-1}&-n^{-1}&-n^{-1}&1
                                   \end{pmatrix}.
                                   \end{align*}                                 
\begin{lemma} \label{lem:Tsconj} We have
\begin{align*}
T^* E^{(1)} (T^*)^{-1} = \begin{pmatrix}  1-2n^{-1} &0&1&1& 0\\
                                                               0 &1-n^{-1}&0&0&1 \\
                                                                n^{-1}-2n^{-2} &0&n^{-1}&n^{-1}&0 \\
                                                                 n^{-1}-2n^{-2} &0&n^{-1}&n^{-1}&0 \\
                                                                 0&n^{-1}-n^{-2}&0&0&n^{-1}
\end{pmatrix},
\end{align*}
\begin{align*}
T^* E^{(2)} (T^*)^{-1} = \begin{pmatrix}  1-2n^{-1}&1&0&1&0\\
                                                                n^{-1}-2n^{-2}&n^{-1}&0&n^{-1}&0 \\
                                                                0&0&1-n^{-1}&0&1 \\
                                                                n^{-1}-2n^{-2}&n^{-1}&0&n^{-1}&0\\
                                                                0&0&n^{-1}-n^{-2}&0&n^{-1}
     \end{pmatrix},
\end{align*}

\begin{align*}
T^* E^{(3)} (T^*)^{-1} = \begin{pmatrix} 1-2n^{-1}&1&1&0&0 \\
                                                               n^{-1}-2n^{-2}&n^{-1}&n^{-1}&0&0 \\
                                                               n^{-1}-2n^{-2}&n^{-1}&n^{-1}&0&0  \\
                                                              0&0&0&1-n^{-1}&1 \\
                                                               0&0&0&n^{-1}-n^{-2}&n^{-1}
                                                               \end{pmatrix}
\end{align*}
\noindent and
\begin{align*}
T^* E^{*(1)} (T^*)^{-1} &= {\rm diag}(0,1,0,0,1), \\
T^* E^{*(2)} (T^*)^{-1} &= {\rm diag}(0,0,1,0,1), \\
T^* E^{*(3)} (T^*)^{-1} &= {\rm diag}(0,0,0,1,1).
\end{align*}
      \end{lemma}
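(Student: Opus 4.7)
The plan is to verify all eight matrix identities by direct computation with the explicit $5 \times 5$ matrices of Definition \ref{def:5by5}, organized so as to avoid grinding through every entry. The previous lemma was dispatched with ``routine matrix multiplication,'' and this one should be similar, but I would first split the verification into the three diagonal assertions about $T^* E^{*(i)} (T^*)^{-1}$ and the three non-diagonal assertions about $T^* E^{(i)} (T^*)^{-1}$.

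For the diagonal identities, the cleanest route is to show that the columns of $(T^*)^{-1}$ are simultaneous eigenvectors of $E^{*(1)}, E^{*(2)}, E^{*(3)}$ with the prescribed eigenvalues. Denote by $v_1, \ldots, v_5$ the columns of $(T^*)^{-1}$; they have the explicit form $v_k = e_k - n^{-1} e_5$ for $k = 2, 3, 4$, together with $v_5 = e_5$ and a slightly more elaborate $v_1 = e_1 - n^{-1}(e_2 + e_3 + e_4) + 2n^{-2} e_5$. Inspecting Definition \ref{def:5by5}, each $E^{*(i)}$ has a sparse nonzero pattern supported in at most two rows, so computing $E^{*(i)} v_k$ yields at most a two-term combination of $e_2, e_3, e_4, e_5$, which one recognizes as either $0$ or $v_k$ itself. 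This gives $E^{*(i)} (T^*)^{-1} = (T^*)^{-1} D_i$ with $D_i$ the asserted diagonal matrix, and left-multiplying by $T^*$ yields the claim.

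For the three non-diagonal identities, I would compute in two stages. First I form $M_i := E^{(i)} (T^*)^{-1}$ column by column; this is cheap since each $E^{(i)}$ of Definition \ref{def:5by5} is a $0/1$ matrix with nonzero entries confined to two rows. Then I left-multiply by $T^*$. Because $T^*$ is lower triangular with a very simple pattern, this second step is nothing more than a sequence of row operations: add $n^{-1}$ times row $1$ of $M_i$ to each of rows $2, 3, 4$, and then add $n^{-2}$ times row $1$ plus $n^{-1}$ times each of rows $2, 3, 4$ to row $5$. A useful sanity check throughout is that the resulting matrices $X_i := T^* E^{(i)} (T^*)^{-1}$ must satisfy $X_i^2 = X_i$ (since $(E^{(i)})^2 = E^{(i)}$ by Lemma \ref{lem:ABEE}) and must pairwise commute (by Lemma \ref{lem:EE}(i)); these identities catch arithmetic slips with the $n^{-1} - 2n^{-2}$ style entries that dominate the computation.

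The main obstacle is not conceptual but purely arithmetic: there is no symmetry relating the $E^{(i)}$'s to the basis in which $T^*$ is diagonal, so the conjugates $T^* E^{(i)} (T^*)^{-1}$ are genuinely non-sparse and must be checked entry by entry. The compensating feature is the sparsity of each $E^{(i)}$, which keeps the individual entry computations short; the only real care required is the bookkeeping of the row-operation step used to assemble the final matrices.
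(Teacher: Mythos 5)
Your proposal is correct and takes essentially the same route as the paper, whose proof is simply ``routine matrix multiplication''; your organization of the diagonal cases via the eigenvector columns of $(T^*)^{-1}$ and of the non-diagonal cases via staged row operations is a sound and slightly more structured way to carry out that same direct verification. The idempotency and commutativity sanity checks are a sensible addition but not needed for the argument.
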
                            
\begin{proof}  Routine matrix multiplication.
\end{proof}

\noindent Next, we list some facts about  $\lbrace A^{(i)}\rbrace_{i=1}^3$ and $\lbrace B^{(i)}\rbrace_{i=1}^3$.

\begin{lemma} \label{lem:AAABBBD} For $i \in \lbrace 1,2,3 \rbrace$ the matrices $A^{(i)}$ and $B^{(i)}$ are diagonalizable. Each has two primitive idempotents.
For $A^{(i)}$ the primitive idempotents and associated eigenvalues are:  $E^{(i)}$
associated with $a_i q + a^{-1}_i q^{-1}$,  and  $I-E^{(i)}$ associated with   $a_i q^{-1} + a^{-1}_i q$.
For $B^{(i)}$ the primitive idempotents and associated eigenvalues are: $E^{*(i)}$
associated with $b_i q + b^{-1}_i q^{-1}$,  and  $I-E^{*(i)}$ associated with   $b_i q^{-1} + b^{-1}_i q$.
\end{lemma}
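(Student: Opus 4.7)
The plan is to derive everything from the identities already recorded in Lemma \ref{lem:ABEE}. Write
\begin{align*}
A^{(i)} &= \alpha_i E^{(i)} + \beta_i I, & \alpha_i &= (a_i-a_i^{-1})(q-q^{-1}), & \beta_i &= a_iq^{-1}+a_i^{-1}q, \\
B^{(i)} &= \alpha_i^* E^{*(i)} + \beta_i^* I, & \alpha_i^* &= (b_i-b_i^{-1})(q-q^{-1}), & \beta_i^* &= b_iq^{-1}+b_i^{-1}q.
\end{align*}
The scalars $\alpha_i$ and $\alpha_i^*$ are nonzero: $q-q^{-1}\neq 0$ because $q$ is not a root of unity, while $a_i-a_i^{-1}\neq 0$ and $b_i-b_i^{-1}\neq 0$ by the standing assumptions $a_i^2\neq 1$ and $b_i^2\neq 1$.

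Next I would argue that $E^{(i)}$ is diagonalizable with spectrum $\{0,1\}$. By \eqref{eq:EE} we have $(E^{(i)})^2=E^{(i)}$, so $E^{(i)}$ satisfies the polynomial $x(x-1)=0$; this polynomial has distinct roots in $\mathbb F$, hence $E^{(i)}$ is diagonalizable, and the only possible eigenvalues are $0$ and $1$. Direct inspection of Definition \ref{def:5by5} shows that $E^{(i)}$ is neither the zero matrix nor the identity matrix (for each $i$ it has both some zero rows and some rows with a $1$ on the diagonal), so both eigenvalues actually occur. Consequently the primitive idempotents of $E^{(i)}$ are $E^{(i)}$ (for eigenvalue $1$) and $I-E^{(i)}$ (for eigenvalue $0$).

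Now I would transfer this spectral decomposition to $A^{(i)}$. From $A^{(i)}=\alpha_i E^{(i)}+\beta_i I$ with $\alpha_i\neq 0$, the matrix $A^{(i)}$ is an invertible affine function of $E^{(i)}$; therefore $A^{(i)}$ is diagonalizable with the same eigenspaces as $E^{(i)}$. Its eigenvalues are $\alpha_i\cdot 1+\beta_i=a_iq+a_i^{-1}q^{-1}$ (on the range of $E^{(i)}$) and $\alpha_i\cdot 0+\beta_i=a_iq^{-1}+a_i^{-1}q$ (on the range of $I-E^{(i)}$); these two eigenvalues are distinct because $\alpha_i\neq 0$. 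Hence $E^{(i)}$ and $I-E^{(i)}$ are the primitive idempotents of $A^{(i)}$ with the claimed associated eigenvalues. The argument for $B^{(i)}$ is identical after swapping $E^{(i)},\alpha_i,\beta_i,a_i$ for $E^{*(i)},\alpha_i^*,\beta_i^*,b_i$.

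The only nontrivial point is checking that $E^{(i)}\neq 0$ and $E^{(i)}\neq I$ (and likewise for $E^{*(i)}$), which is immediate from Definition \ref{def:5by5}; everything else is a mechanical consequence of the idempotent identity $(E^{(i)})^2=E^{(i)}$ together with the affine relation between $A^{(i)}$ and $E^{(i)}$.
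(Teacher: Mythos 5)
Your proof is correct, but it reaches diagonalizability by a different route than the paper. The paper's proof is terse: it cites Lemma \ref{lem:ABEE} together with Lemmas \ref{lem:Tconj} and \ref{lem:Tsconj}, i.e.\ it uses the explicit conjugating matrices $T$ and $T^*$, which exhibit $TE^{(i)}T^{-1}$ and $T^*E^{*(i)}(T^*)^{-1}$ as diagonal $0$--$1$ matrices; diagonalizability of $E^{(i)}$ and $E^{*(i)}$ (and the fact that both eigenvalues $0,1$ occur) is then read off directly, and the affine relations \eqref{eq:AE}, \eqref{eq:BE} transfer this to $A^{(i)}$, $B^{(i)}$. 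You instead use only the idempotency $(E^{(i)})^2=E^{(i)}$ from \eqref{eq:EE}: the minimal polynomial divides $x(x-1)$, which has distinct roots, so $E^{(i)}$ is diagonalizable with spectrum contained in $\{0,1\}$, and a glance at Definition \ref{def:5by5} rules out $E^{(i)}=0$ and $E^{(i)}=I$. Your argument is more self-contained and more elementary --- it does not require the explicit similarity transformations, which in the paper exist mainly to serve the later simultaneous-diagonalization and irreducibility arguments (Lemmas \ref{lem:TLT}--\ref{lem:basis}) --- while the paper's version costs nothing extra given that $T$ and $T^*$ have already been computed. All the supporting details in your write-up check out: $\alpha_i=(a_i-a_i^{-1})(q-q^{-1})\neq 0$ by the standing hypotheses $a_i^2\neq 1$ and $q$ not a root of unity, the two eigenvalues of $A^{(i)}$ differ by exactly $\alpha_i$, and the identification of $E^{(i)}$, $I-E^{(i)}$ as the projections onto the two eigenspaces follows from the decomposition $V=\operatorname{im}E^{(i)}\oplus\ker E^{(i)}$.
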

\begin{proof} The matrix $A^{(i)}$ is diagonalizable by  Lemma  \ref{lem:ABEE} and Lemma \ref{lem:Tconj}. 
The matrix $B^{(i)}$ is diagonalizable by  Lemma  \ref{lem:ABEE} and Lemma \ref{lem:Tsconj}. The remaining
assertions follow from Lemma \ref{lem:ABEE}.
\end{proof}

\noindent Our next goal is to show that the $\mathbb O_q$-module $V$ is irreducible, provided that $n \not=1$ and $n \not=2$.

\begin{definition} \label{def:ingredients} \rm We define some matrices in ${\rm Mat}_5(\mathbb F)$:
\begin{align*}
&\Lambda_1 = E^{(1)} E^{(2)}, \qquad \qquad 
\Lambda_2 = E^{(1)}-\Lambda_1, \\
&\Lambda_3 = E^{(2)}-\Lambda_1, \qquad \qquad 
\Lambda_4 = E^{(3)}-\Lambda_1, \\
&\Lambda_5 = I - \Lambda_1-\Lambda_2 - \Lambda_3 - \Lambda_4.
\end{align*}
\end{definition}

\begin{lemma}\label{lem:TLT} We have
\begin{align*}
T \Lambda_1 T^{-1} &= {\rm diag}(1,0,0,0,0), \\
T \Lambda_2 T^{-1} &= {\rm diag}(0,1,0,0,0), \\
T \Lambda_3 T^{-1} &= {\rm diag}(0,0,1,0,0), \\
T \Lambda_4 T^{-1} &= {\rm diag}(0,0,0,1,0), \\
T \Lambda_5 T^{-1} &= {\rm diag}(0,0,0,0,1).
\end{align*}
\end{lemma}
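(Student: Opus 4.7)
The plan is to conjugate each of the definitions in Definition \ref{def:ingredients} by $T$ and reduce everything to the diagonal formulas for $T E^{(i)} T^{-1}$ provided by Lemma \ref{lem:Tconj}. Conjugation by $T$ is an algebra automorphism of ${\rm Mat}_5(\mathbb F)$, so it commutes with sums, differences, and products and fixes the identity $I$. This means each $T \Lambda_k T^{-1}$ can be computed just by replacing every $E^{(i)}$ in the definition of $\Lambda_k$ with the corresponding diagonal matrix from Lemma \ref{lem:Tconj} and performing diagonal arithmetic.

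Concretely, I would first compute
\begin{align*}
T \Lambda_1 T^{-1} = \bigl(T E^{(1)} T^{-1}\bigr)\bigl(T E^{(2)} T^{-1}\bigr)
= {\rm diag}(1,1,0,0,0)\,{\rm diag}(1,0,1,0,0) = {\rm diag}(1,0,0,0,0),
\end{align*}
using that the product of two diagonal matrices is entrywise multiplication. Then the values of $T\Lambda_2 T^{-1}$, $T\Lambda_3 T^{-1}$, $T\Lambda_4 T^{-1}$ drop out by subtraction: for instance
\begin{align*}
T \Lambda_2 T^{-1} = T E^{(1)} T^{-1} - T \Lambda_1 T^{-1} = {\rm diag}(1,1,0,0,0) - {\rm diag}(1,0,0,0,0) = {\rm diag}(0,1,0,0,0),
\end{align*}
and similarly for $\Lambda_3$ (using $E^{(2)}$) and $\Lambda_4$ (using $E^{(3)}$).

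Finally, for $\Lambda_5 = I - \Lambda_1 - \Lambda_2 - \Lambda_3 - \Lambda_4$, I would sum the four diagonal matrices already obtained to get ${\rm diag}(1,1,1,1,0)$, and then conclude
\begin{align*}
T \Lambda_5 T^{-1} = I - {\rm diag}(1,1,1,1,0) = {\rm diag}(0,0,0,0,1).
\end{align*}

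There is essentially no obstacle here; the only thing to be careful about is the bookkeeping for which diagonal entry is $1$ in each case, which one can verify by inspection since Lemma \ref{lem:Tconj} already did the substantive matrix multiplication work.
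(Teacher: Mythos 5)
Your proof is correct and is exactly the argument the paper intends: the paper's proof is the one-line citation ``By Lemma \ref{lem:Tconj} and Definition \ref{def:ingredients},'' and you have simply spelled out the diagonal arithmetic that citation leaves implicit. All five computations check out.
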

\begin{proof} By Lemma \ref{lem:Tconj} and Definition \ref{def:ingredients}.
\end{proof}

\begin{lemma} \label{lem:ingredient2} For $\Lambda^*_1 = E^{*(1)} E^{*(2)}$ we have
\begin{align*}
T \Lambda^*_1 T^{-1} = \frac{1}{n^2} \begin{pmatrix} 1&n-1&n-1&n-1&(n-1)(n-2) \\
                                                                                      1&n-1&n-1&n-1&(n-1)(n-2) \\
                                                                                       1&n-1&n-1&n-1&(n-1)(n-2) \\
                                                                                        1&n-1&n-1&n-1&(n-1)(n-2) \\
                                                                                         1&n-1&n-1&n-1&(n-1)(n-2) 
                                                            \end{pmatrix}.
                                                            \end{align*}
\end{lemma}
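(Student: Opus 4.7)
The plan is to invoke Lemma \ref{lem:Tsconj}, which provides explicit matrix representations of $TE^{*(1)}T^{-1}$ and $TE^{*(2)}T^{-1}$. Since conjugation by $T$ is an algebra homomorphism of $\mathrm{Mat}_5(\mathbb F)$,
$$T\Lambda^*_1 T^{-1} \;=\; T E^{*(1)}E^{*(2)} T^{-1} \;=\; \bigl(TE^{*(1)}T^{-1}\bigr)\bigl(TE^{*(2)}T^{-1}\bigr),$$
so the proof reduces to an ordinary $5\times 5$ matrix multiplication of two known matrices.

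Inspecting the first factor, I observe that rows $1$ and $2$ of $TE^{*(1)}T^{-1}$ are both equal to $(n^{-1},\,1-n^{-1},\,0,\,0,\,0)$, while rows $3$, $4$, $5$ are all equal to $(0,\,0,\,n^{-1},\,n^{-1},\,1-2n^{-1})$. Consequently the product inherits this row-repetition pattern: rows $1,2$ of $T\Lambda^*_1 T^{-1}$ must coincide, and rows $3,4,5$ must coincide. So only two candidate rows need to be computed. Row $1$ of the product equals $n^{-1}$ times row $1$ of $TE^{*(2)}T^{-1}$ plus $(1-n^{-1})$ times row $2$ of $TE^{*(2)}T^{-1}$; similarly row $3$ of the product equals $n^{-1}$ times row $3$ plus $n^{-1}$ times row $4$ plus $(1-2n^{-1})$ times row $5$ of $TE^{*(2)}T^{-1}$.

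A short calculation shows both linear combinations produce the same vector, namely $\tfrac{1}{n^2}\bigl(1,\; n-1,\; n-1,\; n-1,\; (n-1)(n-2)\bigr)$; for instance the last entry comes from $(1-n^{-1})(1-2n^{-1}) = (n-1)(n-2)/n^2$ in the first case and from $n^{-1}\cdot 0 + n^{-1}(1-2n^{-1}) + (1-2n^{-1})^2 = (1-n^{-1})(1-2n^{-1})$ in the second. Stacking this common row five times gives the claimed matrix.

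There is no structural subtlety beyond the homomorphism identity above; the only obstacle is arithmetic bookkeeping, concentrated in verifying that the two distinct candidate rows of the product do in fact coincide in all five columns.
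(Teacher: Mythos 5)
Your proof is correct and takes essentially the same route as the paper, whose entire proof is ``By Lemma~\ref{lem:Tconj}'' --- i.e., write $T\Lambda^*_1T^{-1}=(TE^{*(1)}T^{-1})(TE^{*(2)}T^{-1})$ and multiply; your row-repetition observation and the two entry checks are accurate. One citation slip: the explicit matrices for $TE^{*(1)}T^{-1}$ and $TE^{*(2)}T^{-1}$ that you actually use come from Lemma~\ref{lem:Tconj}, not Lemma~\ref{lem:Tsconj} (the latter conjugates by $T^*$ and makes the $E^{*(i)}$ diagonal instead).
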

\begin{proof} By Lemma \ref{lem:Tconj}.
\end{proof}

\begin{lemma}\label{lem:basis} Assume that $n \not=1$ and $n \not=2$. 
\begin{enumerate}
\item[\rm (i)]
The following matrices form a basis for ${\rm Mat}_5(\mathbb F)$:
\begin{align*}
\Lambda_i \Lambda^*_1 \Lambda_j, \qquad \qquad 1 \leq i,j\leq 5.
\end{align*}
\item[\rm (ii)] The algebra ${\rm Mat}_5(\mathbb F)$ is generated by 
\begin{align*}
E^{(1)}, \quad E^{(2)}, \quad E^{(3)}, \quad E^{*(1)}, \quad E^{*(2)}, \quad E^{*(3)}.
\end{align*}
\item[\rm (iii)] The algebra ${\rm Mat}_5(\mathbb F)$ is generated by 
\begin{align*}
A^{(1)}, \quad A^{(2)}, \quad A^{(3)}, \quad B^{(1)}, \quad B^{(2)}, \quad B^{(3)}.
\end{align*}
\end{enumerate}
\end{lemma}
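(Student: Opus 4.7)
The plan is to prove (i) by a direct dimension count after conjugating by $T$, and then to deduce (ii) and (iii) from (i) by expressing generators in terms of generators. For (i), I observe that ${\rm Mat}_5(\mathbb F)$ has dimension $25$, and we are given exactly $25$ candidate matrices $\Lambda_i\Lambda^*_1\Lambda_j$, so it is enough to show they are linearly independent, and for this I will work in the $T$-conjugated picture. By Lemma \ref{lem:TLT}, $T\Lambda_i T^{-1}$ is the standard matrix unit $E_{ii}$ ($1$ in the $(i,i)$-entry and zero elsewhere). By Lemma \ref{lem:ingredient2}, the matrix $T\Lambda^*_1 T^{-1}$ has the special feature that all its rows are equal, with $l$-th entry $c_l/n^2$, where $c_1=1$, $c_2=c_3=c_4=n-1$, $c_5=(n-1)(n-2)$.

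Given this, the key computation is
\begin{align*}
T\Lambda_i\Lambda^*_1\Lambda_j T^{-1} \;=\; E_{ii}\,\bigl(T\Lambda^*_1 T^{-1}\bigr)\,E_{jj} \;=\; \frac{c_j}{n^2}\,E_{ij},
\end{align*}
since conjugation by $E_{ii}$ on the left and $E_{jj}$ on the right extracts the $(i,j)$-entry of the middle matrix and places it in position $(i,j)$. Under the assumptions $n\neq 1$ and $n\neq 2$ we have $c_j\neq 0$ for $1\leq j\leq 5$, so the $25$ matrices $T\Lambda_i\Lambda^*_1\Lambda_j T^{-1}$ are nonzero scalar multiples of the matrix units $E_{ij}$, which form a basis of ${\rm Mat}_5(\mathbb F)$. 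Since conjugation by $T$ is a linear isomorphism of ${\rm Mat}_5(\mathbb F)$, this proves (i).

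For (ii), each $\Lambda_i$ is by Definition \ref{def:ingredients} a polynomial (in fact, an integer linear combination of products) in $E^{(1)},E^{(2)},E^{(3)}$, and $\Lambda^*_1=E^{*(1)}E^{*(2)}$, so every basis element $\Lambda_i\Lambda^*_1\Lambda_j$ lies in the subalgebra generated by $E^{(1)},E^{(2)},E^{(3)},E^{*(1)},E^{*(2)},E^{*(3)}$; since these basis elements span ${\rm Mat}_5(\mathbb F)$, this subalgebra equals the whole algebra. For (iii), I appeal to \eqref{eq:AE}: the scalar $(a_i-a_i^{-1})(q-q^{-1})$ is nonzero because $a_i^2\neq 1$ and $q$ is not a root of unity, so $E^{(i)}$ is a linear combination of $I$ and $A^{(i)}$; likewise $b_i^2\neq 1$ gives $E^{*(i)}$ as a linear combination of $I$ and $B^{(i)}$ via \eqref{eq:BE}. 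Substituting into (ii) yields (iii).

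The only nontrivial part is the matrix identity for $T\Lambda^*_1 T^{-1}$ behind Lemma \ref{lem:ingredient2}, but that is already in hand. The rest is bookkeeping; the crucial conceptual point is that the hypotheses $n\neq 1$ and $n\neq 2$ are exactly what is needed to make all five coefficients $c_j$ nonzero, and without either one the family $\{\Lambda_i\Lambda^*_1\Lambda_j\}$ degenerates and fails to span.
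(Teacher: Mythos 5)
Your proof is correct and follows essentially the same route as the paper: conjugate by $T$, use Lemmas \ref{lem:TLT} and \ref{lem:ingredient2} to see that each $T\Lambda_i\Lambda^*_1\Lambda_j T^{-1}$ is a nonzero multiple of a matrix unit, then deduce (ii) from the construction of the $\Lambda$'s and (iii) from \eqref{eq:AE}, \eqref{eq:BE}. Your explicit identification of the coefficients $c_j/n^2$ just makes precise the paper's assertion that the $(i,j)$-entry is nonzero, and correctly pinpoints where the hypotheses $n\neq 1$, $n\neq 2$ enter.
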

\begin{proof} (i) It suffices to show that the following matrices form a basis for ${\rm Mat}_5(\mathbb F)$:
\begin{align}
 T \Lambda_i \Lambda^*_1 \Lambda_j T^{-1}, \qquad \qquad 1 \leq i,j\leq 5.        \label{eq:TLLLT}
\end{align}
Using Lemmas \ref{lem:TLT},  \ref{lem:ingredient2} we find that for $1 \leq i,j\leq 5$ the matrix 
$ T \Lambda_i \Lambda^*_1 \Lambda_j T^{-1}$ has $(i,j)$-entry nonzero and all other entries zero. Consequently,
the matrices  \eqref{eq:TLLLT} form a basis for ${\rm Mat}_5(\mathbb F)$. The result follows.
\\
\noindent (ii) By (i) and the construction. \\
\noindent (iii) By (ii) and \eqref{eq:AE}, \eqref{eq:BE}.
\end{proof}

\begin{proposition} The $\mathbb O_q$-module $V$ is irreducible, provided that $n \not=1$ and $n \not=2$.
\end{proposition}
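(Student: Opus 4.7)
The plan is to deduce the irreducibility of $V$ directly from Lemma \ref{lem:basis}(iii), which, under the hypotheses $n\neq 1$ and $n\neq 2$, asserts that the matrices $A^{(i)}, B^{(i)}$ ($i\in\{1,2,3\}$) generate the full matrix algebra $\mathrm{Mat}_5(\mathbb{F})$. By Proposition \ref{lem:OQM}, these six matrices are the images of the standard generators $A_i, B_i$ of $\mathbb{O}_q$ under the algebra homomorphism $\rho:\mathbb{O}_q\to\mathrm{End}(V)=\mathrm{Mat}_5(\mathbb{F})$ encoding the module structure. Since the image $\rho(\mathbb{O}_q)$ is a subalgebra of $\mathrm{Mat}_5(\mathbb{F})$ containing these six generators, Lemma \ref{lem:basis}(iii) yields $\rho(\mathbb{O}_q)=\mathrm{Mat}_5(\mathbb{F})$.

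Next I would invoke the standard linear-algebra fact that $\mathrm{Mat}_5(\mathbb{F})$ acts irreducibly on the column vector space $V=\mathbb{F}^5$: for any nonzero $v\in V$ and any $w\in V$, there exists a matrix $M\in\mathrm{Mat}_5(\mathbb{F})$ with $Mv=w$. Combining this with the surjectivity of $\rho$, for every nonzero $v\in V$ and every $w\in V$ there exists $X\in\mathbb{O}_q$ with $Xv=w$. Therefore $V$ has no nonzero proper $\mathbb{O}_q$-submodule, so the $\mathbb{O}_q$-module $V$ is irreducible.

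The only substantive work lies in Lemma \ref{lem:basis}, whose proof is already carried out in the excerpt by conjugating with the change-of-basis matrix $T$ and observing that the twenty-five matrices $T\Lambda_i\Lambda_1^*\Lambda_jT^{-1}$ have disjoint supports (each with a single nonzero entry in position $(i,j)$, whose value is a nonzero scalar multiple of $1/n^2$, $(n-1)/n^2$, or $(n-1)(n-2)/n^2$). This is precisely where the conditions $n\neq 1$ and $n\neq 2$ enter, and it is the only nontrivial step; once that lemma is in hand, the irreducibility of $V$ is an immediate consequence of Jacobson density for the full matrix algebra, so there is no further obstacle.
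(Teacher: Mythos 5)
Your proposal is correct and follows exactly the paper's own argument: the paper proves the proposition by citing Lemma \ref{lem:basis}(iii) together with the fact that $V$ is irreducible as a module for ${\rm Mat}_5(\mathbb F)$. You have simply spelled out the same two steps (surjectivity of the representation onto the full matrix algebra, then transitivity of the matrix algebra on nonzero vectors) in more detail.
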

\begin{proof} By Lemma \ref{lem:basis}(iii) and since $V$ is irreducible as a module for ${\rm Mat}_5(\mathbb F)$.
\end{proof}

\noindent Next, we describe the Lusztig intertwiners for $V$.

\begin{proposition} \label{prop:Lint} The following hold for $i \in \lbrace 1,2,3\rbrace$.
\begin{enumerate}
\item[\rm (i)]  The Lusztig intertwiner for $L_i$ and $V$ is 
\begin{align*}
\Psi^{(i)} = (a_i^{-1}-a_i) E^{(i)} + a_i I.
\end{align*}
\item[\rm (ii)] The Lusztig intertwiner for $L^*_i$ and $V$ is 
\begin{align*}
\Psi^{*(i)} = (b_i^{-1}-b_i) E^{*(i)} + b_i I.
\end{align*}
\end{enumerate}
\end{proposition}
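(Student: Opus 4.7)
The plan is to reduce the problem directly to Example \ref{ex:d1}, which already carries out the relevant computation in the $d=1$ case. By Lemma \ref{lem:AAABBBD}, for each $i \in \lbrace 1,2,3\rbrace$ the matrix $A^{(i)}$ is diagonalizable with exactly two distinct eigenvalues. Consequently, in the notation of Section 6 applied to $A^{(i)}$ in place of $A^{(1)}$, we are in the case $d=1$. The standing assumption $a_i^2 \not= 1$ (together with $q$ not a root of unity) guarantees that these two eigenvalues are distinct, so $A^{(i)}$ is not a scalar multiple of $I$, and the construction of the Lusztig intertwiner $\Psi^{(i)}$ via Definition \ref{def:Psi} applies.

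The next step is to line up Lemma \ref{lem:AAABBBD} with the data $\theta_0, \theta_1, a, E_0^{(i)}, E_1^{(i)}$ of Lemma \ref{lem:Path}. The eigenvalues $a_i q + a_i^{-1} q^{-1}$ and $a_i q^{-1} + a_i^{-1} q$ of $A^{(i)}$ match \eqref{eq:eigenval} for $d=1$ with the scalar $a$ of Lemma \ref{lem:Path} taken to be $a_i$; accordingly $E_0^{(i)} = E^{(i)}$ and $E_1^{(i)} = I - E^{(i)}$ in the notation of Section 6. Since $d=1$, the graph $\mathcal{D}^{(i)}$ is trivially a path on two vertices, so no genuine use of the path/cycle analysis is required—the hypotheses of Lemma \ref{lem:Path} are automatic.

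With these identifications, Example \ref{ex:d1} gives at once
\begin{align*}
\Psi^{(i)} = (a_i^{-1} - a_i)\, E^{(i)} + a_i\, I,
\end{align*}
proving (i). For (ii) I would run the same argument for the Lusztig automorphism $L^*_i$: by Lemma \ref{lem:AAABBBD}, $B^{(i)}$ is diagonalizable with the two distinct eigenvalues $b_i q + b_i^{-1} q^{-1}$ and $b_i q^{-1} + b_i^{-1} q$, and primitive idempotents $E^{*(i)}$ and $I - E^{*(i)}$. The discussion at the start of Section 6 notes that the development given there for $L_1$ applies with the obvious modifications to the other five Lusztig automorphisms, in particular to $L^*_i$ with the roles of $A^{(1)}, E^{(1)}_j, a$ played by $B^{(i)}, E^{*(i)}$-type idempotents, and $b_i$. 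Applying Example \ref{ex:d1} in this dual setting yields $\Psi^{*(i)} = (b_i^{-1} - b_i)\, E^{*(i)} + b_i\, I$.

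No substantive obstacle is expected; the whole proof is bookkeeping. The only point that needs care is confirming that the labeling conventions of Lemma \ref{lem:Path} (eigenvalue ordering, the scalar $a$, primitive idempotents) are consistently matched with those of Lemma \ref{lem:AAABBBD}. Once that is checked, the formulas follow directly from Example \ref{ex:d1}.
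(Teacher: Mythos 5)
Your proposal is correct and follows essentially the same route as the paper, which likewise deduces the result from Example \ref{ex:d1} together with Lemma \ref{lem:AAABBBD}; you have merely spelled out the identification of the data $(d, a, \theta_0, \theta_1, E^{(1)}_0, E^{(1)}_1)$ that the paper leaves implicit. No gaps.
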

\begin{proof} The Lusztig intertwiner for $L_1$ and $V$ is obtained from Example \ref{ex:d1} and Lemma \ref{lem:AAABBBD}.
The other Lusztig intertwiners for $V$ are similarly obtained.
\end{proof}



\bigskip


\noindent Paul Terwilliger \hfil\break
\noindent Department of Mathematics \hfil\break
\noindent University of Wisconsin \hfil\break
\noindent 480 Lincoln Drive \hfil\break
\noindent Madison, WI 53706-1388 USA \hfil\break
\noindent email: {\tt terwilli@math.wisc.edu }\hfil\break

\section{Statements and Declarations}

\noindent {\bf Funding}: The author declares that no funds, grants, or other support were received during the preparation of this manuscript.
\medskip

\noindent  {\bf Competing interests}:  The author  has no relevant financial or non-financial interests to disclose.
\medskip

\noindent {\bf Data availability}: All data generated or analyzed during this study are included in this published article.

\end{document}